            \newenvironment{proof}{\emph{Proof:}}{\hspace{\stretch{1}}\rule{1ex}{1ex}} 
            \newenvironment{keywords}{\emph{Keywords.}}{\hspace{\stretch{1}}}          
            \newtheorem{definition}{Definition}                                        
            \newtheorem{lemma}{Lemma}                                                  
            \newtheorem{remark}{Remark}                                                
            \newtheorem{theorem}{Theorem}                                              
            \def\tiltlefyle{\tiltlefyleA}                                              
\def\tiltlefyleA{\title{\LARGE\bf{\titlec}\thanks{\fadationc}}     
\author{\authorc \thanks{\adressc. Email: \emailc}         
, \authorcc  \thanks{\adresscc. Email: \emailcc}      
}                                                                  
\date{}\maketitle                                                  
\begin{abstract}\abstractc\end{abstract}                           
\begin{keywords}\keywordsc\end{keywords}
\bibliographystyle{plain}}                                         
\def\titlec{Improved finite-time stability and instability theorems for stochastic nonlinear systems}
\def\fadationc{This paper was not presented at any IFAC
meeting.
}
\def\authorc{Weihai~Zhang}                                                                                                        
\def\adressc{College of Electrical Engineering and Automation, Shandong
University of Science and Technology, Qingdao,  Shandong Province, 266590,
P. R. China. Corresponding author}        
\def\emailc{w\_hzhang@163.com}
\def\authorcc{Liqiang~Yao}                                                                                                        
\def\adresscc{School of Mathematics and Information Science, Yantai University, Yantai,
Shandong Province, 264005, P. R. China}                            
\def\emailcc{liqiangyaoyt@163.com}
\def\abstractc{This paper studies finite-time stability and instability theorems in
probability sense for stochastic nonlinear systems. Firstly, a new
sufficient condition is proposed to guarantee that the considered
system has a global solution. Secondly, we propose improved
finite-time stability and instability criteria that relax the
constraints on $\mathcal {L}V$ (the infinitesimal operator of
Lyapunov function $V$) by the uniformly asymptotically stable
function(UASF). The improved finite-time stability theorems allow
$\mathcal {L}V$ to be indefinite (negative  or positive) rather than
just only allow $\mathcal {L}V$ to be negative. Most existing
finite-time stability and instability results can be viewed as
special cases of the obtained theorems. Finally, some simulation
examples verify the validity of the theoretical  results.
}
\def\keywordsc{Stochastic nonlinear systems; finite-time stability;
finite-time instability; uniformly asymptotically stable function(UASF).}
\begin{document}
\tiltlefyle
\section{Introduction}\label{S1}
Stability plays a central role in systems theory and engineering
applications, and is always the most fundamental consideration in
system analysis and synthesis. The two most commonly used concepts
in  stability theory  are asymptotic stability
[\ref{Khalil1992}-\ref{xuelingrong}] and finite-time stability
[\ref{Haimo1986}-\ref{Nersesov2008}]. The asymptotic stability
describes the asymptotic behavior of the system state when time
tends to infinity, however, the finite-time stability illustrates
the transient performance of the system state trajectory within a
finite time interval. For many engineering problems, one often pays
more attention to  finite time stability instead of   asymptotic
stability; see, e.g.,  the tracking control of robotic manipulators
[\ref{Yu2005}],  the position and orientation control of underwater
vehicles [\ref{Banavar2006}]. Specially, the finite-time stable
system usually shows better robustness and faster convergence rate.
So finite-time stability and its related control problems have
aroused great interest  during the past two decades; see
[\ref{Wang2013}-\ref{Ming2021}].

Many practical systems are subject to stochastic perturbations  such
as  environmental noise. Actual systems in  random environments are
often modeled as stochastic systems, which can be studied by
stochastic analysis method. It is natural that the  finite-time
stability and stabilization of stochastic systems have attracted  a
great deal of attention.
For example, [\ref{Yin2011}] presented  the definition of
finite-time stability and established finite-time stability and
instability criteria in probability sense under the hypothesis that
the considered system has a unique strong solution. Generally
speaking, it is difficult to ensure that there exists a unique
strong solution to stochastic nonlinear systems without Lipschitz
condition. So researchers have to study  finite-time stability and
stabilization problems for  stochastic nonlinear systems under the
framework of strong solution or weak solution. [\ref{Yu2019}]
generalized the finite-time stability criteria of  [\ref{Yin2011}]
and relaxed the constraint conditions  on  $\mathcal {L}V$ (the
infinitesimal generator of Lyapunov function) in  strong solution or
weak solution sense. Recently, [\ref{Yu2021}] has presented new
finite-time stability results by multiple Lyapunov functions for
stochastic nonlinear time-varying systems. In addition,
[\ref{Wang2015}],  [\ref{xiexuejun1}], [\ref{Min2017}] and
[\ref{Zhao2018}]  studied feedback  stabilization problems for
several classes of stochastic nonlinear strict-feedback systems in
finite time. [\ref{Huang2016}] and [\ref{Song2019}] discussed the
finite-time stabilization for two classes of stochastic nonlinear
high-order systems with switching, respectively.

In this paper, we focus on the existence of the system solution and
its finite-time stability  and instability. A new sufficient
condition is given to guarantee that the considered system has a
global solutio,   which  is weaker than the  sufficient condition
appeared  in Lemma 1 of [\ref{Yu2021}]. It can be found that, in
existing literature,   most finite-time stability theorems are based
on  $\mathcal {L}V$ to be negative  (e.g., [\ref{Yin2011},
\ref{Zhao2018}]) or non-positive (e.g., [\ref{Yu2019}]), where  $V$
is a Lyapunov  function. Similarly, most finite-time instability
theorems of stochastic nonlinear systems also  require $\mathcal
{L}V$ to be negative (e.g., [\ref{Yin2011}, \ref{Yu2019}]). In order
to relax the constraints on  Lyapunov  function, this paper proposes
improved finite-time stability and instability theorems. By UASF
 proposed in [\ref{Zhou2018}, \ref{Zhou2016}], we establish
some improved finite-time stability/instability theorems in which
$\mathcal {L}V$ can be  negative definite  or  positive definite
rather than just only negative definite. On the other hand, the
obtained results in this paper  contain or cover  many existing
sufficient conditions about finite-time stability and instability
criteria as  special cases.

The rest of this paper is organized as follows. Mathematical
preliminaries  are given in Section 2. Section 3 and  Section 4
discuss  finite-time stability and instability of stochastic
nonlinear systems, respectively. In Section 5, an example is given
to show the application of our improved finite-time stability
theorems. Section 6 concludes  this paper  with some remarks.

{\bfseries Notations:} The set of all natural numbers is denoted by
$\mathcal {N}$. $\mathcal {R}_+$ denotes the family of all
nonnegative real numbers and $\mathcal{R}^r$ denotes the real
$r$-dimensional space.  $|x|$ is the Euclidean norm of a vector $x$.
The Frobenius norm of a  real  matrix $X$ is
$||X||=[\text{Tr}(X^TX)]^{1/2}$. $C^{1,2}([t_0, \infty) \times
\mathcal{R}^r; \mathcal{R}_+)$ stands for the set of all nonnegative
functions $w(t, x)$ that are $C^1$ in $t$ and  $C^2$ in $x$.
$C^{1,2}_0([t_0, \infty) \times \mathcal{R}^r; \mathcal{R}_+)$
denotes the set of all nonnegative functions $w(t, x)\in
C^{1,2}([t_0, \infty) \times \mathcal{R}^r; \mathcal{R}_+)$ except
possibly at the point $x=0$. $\mathcal{K}$ represents the set of all
strictly increasing, continuous functions $\gamma(t)$  with
$\gamma(0)=0$. $\mathcal{K}_\infty$ denotes the set of all unbounded
functions $\gamma(t)$ with  $\gamma(t)\in\mathcal{K}$. $\mathcal{KL}$
denotes the set of all functions
$\beta(s,t)\in\mathcal{K}$ with $t$ being fixed and $\lim_{t
\rightarrow \infty}\beta(s,t)=0$ with $s$ being fixed.


\section{Preliminaries}\label{S2}
Consider  the stochastic nonlinear It\^o system
\begin{equation}\label{eq21}
dx(t)=f(t, x(t))dt+g(t, x(t))dW(t),
\end{equation}
in which system state $x(t)\in \mathcal{R}^{r}$ and $x(t_0)=x_0$,
$W(t)\in \mathcal{R}^{d}$,  defined on the filtered probability
space $(\Omega, \mathcal{F}, \{\mathcal {F}_t\}_{t\geq t_0}, P)$,
is a standard Wiener process. The  continuous functions $f: [t_0,
\infty) \times \mathcal{R}^r \rightarrow \mathcal{R}^r $ and $g:
[t_0, \infty) \times \mathcal{R}^r \rightarrow \mathcal{R}^{r\times
d}$ satisfy $f(t, 0)=0$ and $g(t, 0)=0$ with $t\in [t_0, \infty)$.

According to Lemma 3.2 in [Chapter 4, \ref{Mao2007}], the trivial
solution of system (\ref{eq21}) is impossible to reach  the origin
in a finite time, if  $f(t, x)$ and  $g(t, x)$ satisfy   local
Lipschitz condition in $x$. Thus some conditions should be imposed
on system (\ref{eq21}) such that system (\ref{eq21}) has a solution,
which may arrive at the origin in a finite time. The following Lemma
\ref{lemma21}  ensures that system (\ref{eq21}) has a unique strong
solution and it can be viewed as a special case of Theorem 170 in
[\ref{Situ2005}].

\begin{lemma}\label{lemma21}
For every $N=1,2,\cdots,$ and any $T\in [t_0, \infty)$,
if system (\ref{eq21}) satisfies that\\
\textbf{H1}: $|f(t, x)|\leq (1+|x|)h(t)$, $||g(t, x)||^2\leq (1+|x|^2)h(t)$ and\\
 \textbf{H2}: $2(x_1-x_2, f(t, x_1)-f(t, x_2))
 +||g(t, x_1)-g(t, x_2)||^2
\leq \psi_T^N(|x_1-x_2|^2)h_T^N(t)$, $|x_1|\vee |x_2|\leq N$\\
for any $t\in[t_0, T]$, where the nonnegative functions $h_T^N(t)$
and $h(t)$ satisfy $\int_{t_0}^Th_T^N(t)dt<\infty$ and
$\int_{t_0}^Th(t)dt<\infty$. Moreover, for any $v\geq 0$,
$\psi_T^N(v)\geq 0$ is a continuous, strictly increasing, non-random
and concave function satisfying $\int_{0+}dv/\psi_T^N(v)=\infty$.
Then, system (\ref{eq21}) has a unique solution for any given
initial value $x_{0}\in \mathcal{R}^r$.
\end{lemma}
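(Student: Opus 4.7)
The plan is to reduce the problem to the classical It\^o existence-uniqueness framework by a localization/truncation argument, then to combine a Picard-type existence scheme (justified by \textbf{H1}) with a Yamada--Watanabe-style uniqueness argument (justified by \textbf{H2}). Since the statement asserts that this is a specialization of Theorem 170 of [Situ 2005], the goal is to reconstruct the standard skeleton for this audience rather than prove anything novel.

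First I would fix $T\in[t_0,\infty)$ and $N\in\mathcal{N}$ and introduce truncated coefficients $f_N(t,x)=f(t,\pi_N(x))$, $g_N(t,x)=g(t,\pi_N(x))$, where $\pi_N$ is the radial projection onto the closed ball of radius $N$. By \textbf{H1}, $f_N$ and $g_N$ are dominated by $(1+N)h(t)$ and $\sqrt{(1+N^2)h(t)}$ respectively with $\int_{t_0}^T h(t)dt<\infty$, so a Picard iteration $x^{(0)}\equiv x_0$, $x^{(n+1)}(t)=x_0+\int_{t_0}^t f_N(s,x^{(n)}(s))ds+\int_{t_0}^t g_N(s,x^{(n)}(s))dW(s)$, together with the Burkholder--Davis--Gundy inequality and the concavity of $\psi_T^N$, will produce a subsequence converging in probability uniformly on $[t_0,T]$ to a strong solution $x_N$ of the truncated SDE. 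The passage to the untruncated equation is handled by defining $\tau_N=\inf\{t\in[t_0,T]:|x_N(t)|\geq N\}$ and noting that, on $[t_0,\tau_N]$, $x_N$ solves the original system; then a Gronwall estimate based on \textbf{H1} applied to $E\sup_{s\leq t\wedge\tau_N}|x_N(s)|^2$ yields a bound independent of $N$, so Chebyshev gives $P(\tau_N<T)\to 0$ and hence $\tau_N\uparrow T$ a.s., producing a global strong solution on $[t_0,T]$.

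For uniqueness, suppose $x_1,x_2$ are two solutions with $x_1(t_0)=x_2(t_0)=x_0$, set $\sigma_N=\inf\{t:|x_1(t)|\vee|x_2(t)|\geq N\}$, and apply It\^o's formula to $|x_1(t\wedge\sigma_N)-x_2(t\wedge\sigma_N)|^2$. The martingale part has zero expectation after the localization, and \textbf{H2} yields
\begin{equation*}
E|x_1(t\wedge\sigma_N)-x_2(t\wedge\sigma_N)|^2\leq\int_{t_0}^t\psi_T^N\bigl(E|x_1(s\wedge\sigma_N)-x_2(s\wedge\sigma_N)|^2\bigr)h_T^N(s)\,ds,
\end{equation*}
using Jensen's inequality together with the concavity of $\psi_T^N$. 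Since $\psi_T^N$ is continuous, strictly increasing, non-negative, with $\int_{0+}dv/\psi_T^N(v)=\infty$, Bihari's inequality forces $E|x_1(t\wedge\sigma_N)-x_2(t\wedge\sigma_N)|^2\equiv 0$, and letting $N\to\infty$ (which is justified because the non-explosion estimate already established gives $\sigma_N\uparrow T$ a.s.) yields pathwise uniqueness on $[t_0,T]$. Finally, because $T$ is arbitrary, the unique strong solution extends to all of $[t_0,\infty)$.

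The main obstacle is the pathwise-uniqueness step: one must verify that $\psi_T^N$ applied to the expectation dominates the expectation of $\psi_T^N$ of the square difference, which is exactly where concavity of $\psi_T^N$ and Jensen's inequality enter, and one must then discharge the Bihari/Osgood comparison rigorously so that the divergence $\int_{0+}dv/\psi_T^N(v)=\infty$ actually rules out a positive solution of the integral inequality. The non-explosion bound under \textbf{H1}, though technically routine, also requires care because the moment inequality produced by BDG contains $\psi_T^N$ on the right, and one has to exploit concavity once more to close the Gronwall-type loop uniformly in $N$.
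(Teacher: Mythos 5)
The paper does not actually prove this lemma: it is stated as a special case of Theorem 170 in [Situ 2005], so there is no internal proof to compare against. Judged on its own merits, your uniqueness half is sound and is the standard argument behind such theorems: apply It\^o's formula to $|x_1(t\wedge\sigma_N)-x_2(t\wedge\sigma_N)|^2$, invoke \textbf{H2}, use Jensen's inequality with the concavity of $\psi_T^N$ to pull the expectation inside, perform the time change $r(t)=\int_{t_0}^t h_T^N(s)\,ds$ (finite by hypothesis), and conclude by Bihari's inequality, noting that $\int_{0+}dv/\psi_T^N(v)=\infty$ forces $\psi_T^N(0)=0$ and rules out any positive solution of the comparison ODE. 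The localization and the passage $N\to\infty$ are also fine once a non-explosion bound from \textbf{H1} is in hand.

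The genuine gap is the existence half. Picard iteration does not converge under the hypotheses you have: \textbf{H2} is a monotonicity-type (one-sided) condition of the form $2(x_1-x_2,\,f(t,x_1)-f(t,x_2))+\|g(t,x_1)-g(t,x_2)\|^2\le\psi_T^N(|x_1-x_2|^2)h_T^N(t)$, and when you apply It\^o's formula to $|x^{(n+1)}(t)-x^{(n)}(t)|^2$ the drift term is $2(x^{(n+1)}-x^{(n)},\,f(t,x^{(n)})-f(t,x^{(n-1)}))$: the vector in the first slot of the inner product does not match the pair of arguments appearing in the coefficient differences, so \textbf{H2} simply cannot be invoked, and concavity of $\psi_T^N$ does not rescue the scheme. (The known convergence results for successive approximations under non-Lipschitz conditions, e.g.\ Yamada's, require a bound on $|f(t,x_1)-f(t,x_2)|$ itself, which is strictly stronger than \textbf{H2}.) The correct skeleton, and the one underlying Situ's Theorem 170, is: (i) weak existence of a solution from continuity of $f,g$ plus the linear growth condition \textbf{H1} via Skorokhod's theorem (exactly the device the paper uses for Lemma 2); (ii) pathwise uniqueness from \textbf{H2} as you argued; (iii) the Yamada--Watanabe theorem to upgrade weak existence plus pathwise uniqueness to existence and uniqueness of a strong solution. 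A smaller slip: your closing remark that the BDG moment estimate for non-explosion ``contains $\psi_T^N$ on the right'' is mistaken --- that estimate uses only \textbf{H1}, so no concavity trick is needed to close the Gronwall loop there.
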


\begin{remark}\label{rem20}
Note that  \textbf{H2} in  Lemma \ref{lemma21} is very restrictive
for system (\ref{eq21}), which  limits the application  scope  of
finite-time stability theorems. In Lemma \ref{lemma21},
\textbf{H2} is used to show  the uniqueness of solution to system
(\ref{eq21}).
\end{remark}

The following Lemma \ref{lemma22} and Lemma \ref{lemma23} give some sufficient
conditions, which ensure that system (\ref{eq21}) has a  continuous
strong solution.

\begin{lemma}\label{lemma22}
If system (\ref{eq21}) satisfies that
\textbf{H3}: $|f( t, x)|^2+||g(t, x)||^2\leq H(1+|x|^2)$ where
$H>0$ is a constant, then system (\ref{eq21}) has a continuous solution
with probability 1.
\end{lemma}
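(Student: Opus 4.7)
\textbf{Proof plan for Lemma \ref{lemma22}.} The goal is to construct a continuous (weak) solution to (\ref{eq21}) using only the linear growth bound \textbf{H3}, without any Lipschitz hypothesis. The natural strategy is Skorokhod's existence method: approximate, extract a convergent subsequence, and identify the limit as a solution.

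First I would regularize the coefficients. Pick mollifiers $\rho_n$ on $\mathcal{R}^r$ and set $f_n(t,x)=(f(t,\cdot)*\rho_n)(x)$ and $g_n(t,x)=(g(t,\cdot)*\rho_n)(x)$. Each $f_n,g_n$ is Lipschitz in $x$, still continuous in $t$, still satisfies $f_n(t,0)\to 0$, and inherits the linear growth bound $|f_n(t,x)|^2+\|g_n(t,x)\|^2\le H'(1+|x|^2)$ with $H'$ depending only on $H$. Classical strong-existence results (e.g.\ the theorem in Mao cited just above Lemma \ref{lemma21}) then yield, on the original filtered space $(\Omega,\mathcal{F},\{\mathcal{F}_t\},P)$ with Wiener process $W$, a unique continuous adapted solution $x_n(t)$ to $dx_n=f_n(t,x_n)dt+g_n(t,x_n)dW$ with $x_n(t_0)=x_0$.

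Next I would derive uniform estimates on $\{x_n\}$ on any interval $[t_0,T]$. Applying It\^o's formula to $|x_n(t)|^2$, taking expectation, and using \textbf{H3} together with the Burkholder--Davis--Gundy inequality for the stochastic integral, Gronwall's lemma gives a bound of the form
\begin{equation*}
E\bigl[\sup_{t\in[t_0,T]}|x_n(t)|^2\bigr]\le C_T(1+|x_0|^2),
\end{equation*}
uniformly in $n$. A similar computation on $E|x_n(t)-x_n(s)|^{2p}$ for $p$ large enough, combined with the linear growth of $f_n,g_n$, produces the moment estimate $E|x_n(t)-x_n(s)|^{2p}\le K_T|t-s|^p$. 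By Kolmogorov's tightness criterion on $C([t_0,T];\mathcal{R}^r)$, the laws of $\{x_n\}$ together with the law of $W$ form a tight family on $C([t_0,T];\mathcal{R}^r)\times C([t_0,T];\mathcal{R}^d)$.

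By Prokhorov's theorem I can then extract a weakly convergent subsequence, and Skorokhod's representation theorem lets me transfer to a new probability space $(\tilde\Omega,\tilde{\mathcal{F}},\tilde P)$ carrying processes $(\tilde x_n,\tilde W_n)\stackrel{d}{=}(x_n,W)$ with $(\tilde x_n,\tilde W_n)\to(\tilde x,\tilde W)$ almost surely in $C([t_0,T])$. The limit $\tilde W$ is again a Wiener process (by characteristic functions) with respect to the natural filtration, and $\tilde x$ has continuous sample paths $\tilde P$-a.s.\ by construction. Finally I would pass to the limit in the approximate equation $\tilde x_n(t)=x_0+\int_{t_0}^t f_n(s,\tilde x_n(s))ds+\int_{t_0}^t g_n(s,\tilde x_n(s))d\tilde W_n(s)$: the drift integral is handled by continuity of $f$, uniform boundedness from \textbf{H3}, and dominated convergence; the stochastic integral is handled by showing $L^2$ convergence of $g_n(\cdot,\tilde x_n)$ to $g(\cdot,\tilde x)$ via continuity of $g$ and the uniform moment bound, followed by the standard It\^o isometry argument. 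This produces a continuous (weak) solution of (\ref{eq21}) and proves the claim.

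The main obstacle I expect is the last step, identifying the limit as a solution of the original equation: without Lipschitz control one has to argue carefully that $g_n(\cdot,\tilde x_n)\to g(\cdot,\tilde x)$ in the appropriate $L^2(dt\otimes d\tilde P)$ sense so that the stochastic integrals converge, and that the convergence is compatible with the filtration generated by $\tilde W$ so that adaptedness is preserved in the limit. Everything else (mollification, moment bounds, tightness) is relatively mechanical given \textbf{H3}.
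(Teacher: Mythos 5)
Your proposal is essentially correct, but it is worth pointing out that the paper does not prove Lemma \ref{lemma22} at all: as stated in Remark 2, the lemma is quoted directly from Theorem 5.2 of [\ref{Skorokhod1965}], so what you have written is in effect a reconstruction of the proof of that cited theorem rather than an alternative to an argument in the paper. Your outline is the standard compactness route (approximate, uniform moment bounds, Kolmogorov tightness, Prokhorov plus Skorokhod representation, identification of the limit as a weak solution), and it is sound; note only that the solution so obtained lives on a new probability space, i.e.\ it is a weak solution, which is consistent with the paper's explicit framework of ``weak solution or strong solution sense.'' Two small technical remarks. First, mollification $f_n=f(t,\cdot)*\rho_n$ does not give globally Lipschitz coefficients: $\nabla f_n = f*\nabla\rho_n$ inherits the linear growth of $f$, so $f_n,g_n$ are only locally Lipschitz with a uniform linear growth bound --- which is still enough for global strong existence and uniqueness of the approximating equations, so the argument survives, but the claim should be stated that way. (Skorokhod's own proof sidesteps even this by using Euler polygonal approximations $x_n(t)=x_0+\int_{t_0}^t f(s,x_n(\eta_n(s)))ds+\int_{t_0}^t g(s,x_n(\eta_n(s)))dW(s)$, which require no existence theory at all.) Second, in the limit identification you must also check that $\tilde W(t)-\tilde W(s)$ is independent of $\sigma(\tilde x(u),\tilde W(u):u\le s)$, not merely that $\tilde W$ is a Wiener process, since otherwise the limiting stochastic integral is not well defined as an It\^o integral; you gesture at this with ``compatible with the filtration,'' and it is indeed the genuinely delicate point, handled by the standard lemma on convergence of stochastic integrals with jointly converging integrands and integrators.
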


\begin{lemma}\label{lemma23}
For system (\ref{eq21}), assume that  $f(t, x)$ and $g(t, x)$ are
locally bounded in $x$  and are  uniformly  bounded  in $t$. If
there exist a function $U(t,x)\in C^{1,2}([t_0, \infty) \times
\mathcal{R}^r; \mathcal{R}_+)$, a  $\mathcal{K}_\infty$ function
${\gamma}(\cdot)$, a continuous function $l(t)$ and a constant
$d_U\geq 0$ such that $\gamma(|x|)\leq U(t,x)$ and $\mathcal
{L}U(t,x)\leq l(t)U(t,x)+d_U$ hold for any $x\in \mathcal{R}^r$,
then system (\ref{eq21}) has a continuous solution with probability
1.
\end{lemma}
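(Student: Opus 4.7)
The plan is to combine a standard truncation/stopping-time construction (for local existence) with a Lyapunov-type non-explosion estimate built from the hypothesis $\mathcal{L}U \leq l(t)U + d_U$, and then use the radial unboundedness $\gamma\in\mathcal{K}_\infty$ to rule out blow-up.

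First I would set up the local solution. Because $f$ and $g$ are continuous, locally bounded in $x$, and uniformly bounded in $t$, one can truncate the coefficients at radius $N$ (e.g.\ multiply by a cutoff $\chi_N(|x|)$) to obtain globally bounded continuous coefficients; these admit a continuous solution $x^{(N)}(t)$ on $[t_0,\infty)$ (for instance via Lemma \ref{lemma22} applied to the truncated system, or by a Skorokhod weak-existence/selection argument). Define $\tau_N=\inf\{t\ge t_0:|x(t)|\ge N\}$ and paste together $x^{(N)}$ on $[\![t_0,\tau_N]\!]$ to obtain a continuous solution $x(t)$ of \eqref{eq21} on $[t_0,\tau_\infty)$, where $\tau_\infty=\lim_{N\to\infty}\tau_N$ is the explosion time. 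The task reduces to showing $\tau_\infty=\infty$ a.s.

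Next I would absorb the $l(t)U$ term. Let $L(t)=\int_{t_0}^{t}l(s)\,ds$, which is finite since $l$ is continuous, and introduce $\widetilde U(t,x)=e^{-L(t)}U(t,x)$. A direct computation of the infinitesimal generator yields
\[
\mathcal{L}\widetilde U(t,x)=e^{-L(t)}\bigl(\mathcal{L}U(t,x)-l(t)U(t,x)\bigr)\le d_U e^{-L(t)}.
\]
Applying Dynkin's formula on the stopped interval $[t_0,t\wedge\tau_N]$ (legitimate because $x(\cdot\wedge\tau_N)$ is bounded and $\widetilde U$ is $C^{1,2}$), one obtains for any $T\ge t_0$ and $t\le T$
\[
\mathbb{E}\bigl[\widetilde U(t\wedge\tau_N,x(t\wedge\tau_N))\bigr]\le \widetilde U(t_0,x_0)+d_U\!\int_{t_0}^{T}e^{-L(s)}\,ds.
\]
Multiplying through by $e^{L(T)\vee 0}\cdot e^{\sup_{s\in[t_0,T]}L(s)}$ produces a constant $C(T,x_0)<\infty$ independent of $N$ such that $\mathbb{E}[U(t\wedge\tau_N,x(t\wedge\tau_N))]\le C(T,x_0)$.

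Finally, I would invoke radial unboundedness. On the event $\{\tau_N\le T\}$ we have $|x(\tau_N)|=N$, and $\gamma(|x|)\le U(t,x)$ gives
\[
\gamma(N)\,P(\tau_N\le T)\le \mathbb{E}\bigl[U(\tau_N\wedge T,x(\tau_N\wedge T))\bigr]\le C(T,x_0).
\]
Since $\gamma\in\mathcal{K}_\infty$, letting $N\to\infty$ forces $P(\tau_N\le T)\to 0$, hence $P(\tau_\infty\le T)=0$ for every $T$, so $\tau_\infty=\infty$ a.s.\ and $x(t)$ is a continuous solution on $[t_0,\infty)$ with probability one. The main obstacle I expect is not the non-explosion estimate (which is routine once $\widetilde U$ is chosen) but the careful justification of local existence from mere continuity and local boundedness without any Lipschitz condition; this requires either the cutoff-plus-Lemma \ref{lemma22} construction or an appeal to a Skorokhod-type existence theorem, and one must verify that the pieced-together process is genuinely a continuous strong (or, at minimum, weak) solution to \eqref{eq21} on $[t_0,\tau_\infty)$.
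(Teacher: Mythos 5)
Your proposal is correct and follows essentially the same route as the paper's own proof: truncate the coefficients to invoke Skorokhod's existence theorem (the paper uses the radial projection $f(t,kx/|x|)$ rather than a cutoff multiplier, but to the same effect), pass to the exponentially weighted function $\widetilde U(t,x)=e^{-\int_{t_0}^t l(s)\,ds}U(t,x)$ to absorb the $l(t)U$ term, bound $E[U(\tau_N\wedge T, x(\tau_N\wedge T))]$ uniformly in $N$ via Dynkin's formula and the continuity of $l$ on $[t_0,T]$, and then use $\gamma\in\mathcal{K}_\infty$ together with $\gamma(|x|)\le U(t,x)$ to force $P\{\tau_N\le T\}\to 0$. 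The only cosmetic difference is that the paper phrases the final step via $\inf_{|x|=k,\,t\ge t_0}U(t,x)\to\infty$ rather than via $\gamma(N)$ directly, which is the same estimate.
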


\begin{proof}
For each integer $k \geq 1$, let
\begin{align*}
f_k(t, x)=
\begin{cases}
f(t, \frac{kx}{|x|}) &\textmd{if}~~|x|\geq k \\
f(t, x) &\textmd{if}~~|x|< k \\
\end{cases},~~~~
g_k(t, x)=
\begin{cases}
g(t, \frac{kx}{|x|}) &\textmd{if}~~|x|\geq k \\
g(t, x) &\textmd{if}~~|x|< k \\
\end{cases},
\end{align*}
together with $f(t, x)$ and $g(t, x)$ being locally bounded in $x$
and  uniformly  bounded in $t$.  Then $|f_k(t, x)|\leq \sup_{|y|\leq
k, t\geq t_0}|f(t, y)|\triangleq c_{1k}<\infty,~~ |g_k(t, x)|\leq
\sup_{|y|\leq k, t\geq t_0}|g(t,y)|\triangleq c_{2k}<\infty.$
According to Theorem 5.2 in [\ref{Skorokhod1965}], we know that
stochastic nonlinear system
\begin{equation}\label{eq22}
dx_k(t)=f_k(t, x_k(t))dt+g_k(t, x_k(t))dW(t)
\end{equation}
has a continuous solution $x_k(t)$ on $[t_0, \infty)$.
Further, define $\tau_k=\inf\{t\geq t_0: |x_k(t)|\geq k\}$, then
(\ref{eq22}) can be rewritten as
\begin{equation}\label{eq23}
dx_k(t)=f(t, x_k(t))dt+g(t, x_k(t))dW(t),
\end{equation}
where $t\in[t_0, \tau_k)$.  For any $T\in [t_0, \infty)$,
let $\tilde{U}(t,x)=U(t,x)\exp\{-\int_{t_0}^tl(s)ds\}$,
then we have
\begin{align*}
\mathcal {L}\tilde{U}(t,x_k(t))
= (\mathcal {L}{U}(t,x_k(t))-l(t){U}(t,x_k(t)))e^{-\int_{t_0}^tl(v)dv}
\leq d_Ue^{-\int_{t_0}^tl(v)dv}
\end{align*}
from which we can further obtain that
\begin{align}\label{eq24}
E[U(\tau_k\wedge T, x(\tau_k\wedge T))]
\leq &E[{U}(t_0,x_0)e^{\int_{t_0}^{\tau_k\wedge T}l(v)dv}]
+d_UE[\int_{t_0}^{\tau_k\wedge T}e^{{\int_{t_0}^{\tau_k\wedge T}l(v)dv}-\int_{t_0}^sl(v)dv}ds]\cr
\leq &{U}(t_0,x_0)e^{\int_{t_0}^{T}|l(v)|dv}
+d_U\int_{t_0}^{ T}e^{\int_{s}^{ T}|l(v)|dv}ds\cr
\leq &{U}(t_0,x_0)e^{M_T(T-t_0)}
+d_UTe^{M_T(T-t_0)},
\end{align}
where $M_T=\max_{t_0\leq t\leq T}\{|l(t)|\}$. Note that
$\inf_{|x|=k, t\geq t_0} U(t, x)P\{\tau_k\leq T\}\leq E[U(\tau_k\wedge T, x(\tau_k\wedge T))], $
then (\ref{eq24}) can be turned into
\begin{equation}\label{eq25}
P\{\tau_k\leq T\}\leq \frac{{U}(t_0,x_0)+d_UT}{\inf_{|x|=k, t\geq t_0} U(t, x)}
e^{M_T(T-t_0)}.
\end{equation}

Let $k\rightarrow \infty$ in (\ref{eq25}), then we get that
$\lim_{k\rightarrow \infty}P\{\tau_k\leq T\}=0$ for any $T\in [t_0,
\infty)$, where
$$
\lim_{k\rightarrow \infty}\inf_{|x|=k, t\geq t_0}
U(t, x)=\infty
$$
is used. This means that $\lim_{k\rightarrow \infty}\tau_k=\infty$
a.s.. Hence, for any $t\in[t_0, \tau_k)$, we define $x(t)=x_k(t)$ in
(\ref{eq23}), then $x(t)$ is the global solution of  system
(\ref{eq21}).
\end{proof}

\begin{remark}\label{rem201}
(i) Lemma \ref{lemma22} comes from Theorem 5.2 in
[\ref{Skorokhod1965}]. Lemma \ref{lemma23} is an improved version of Lemma
1 in [\ref{Yu2021}]. In Lemma 1 of [\ref{Yu2021}],  $l(t)$ is a
nonnegative function with $\int_{t_0}^\infty |l(t)|dt<\infty$ and
$d_U=0$. However, Lemma \ref{lemma23} removes these strict
constraints and allow $l(t)$ to be a continuous function and
$d_U\geq 0$.
\end{remark}

\begin{definition}\label{def21} ([\ref{Yin2011}])
If system (\ref{eq21}) has a   solution $x(t)$ and satisfies that \\
(i) For any $x_0\in\mathcal{R}^r\backslash\{0\}$,
the stochastic settling time $\rho_{x_{0}}=\inf\{t\geq t_0: x(t)=0\}$
is a finite time a.s., i.e, $P\{\rho_{x_{0}}<\infty\}=1$.\\
(ii) There is a positive constant $\delta(\varepsilon, R)$ such that
$P\{|x(t)|<R, ~~ t\geq t_0\}\geq 1-\varepsilon$ with
$|x_{0}|<\delta(\varepsilon, R)$  for any
$0<\varepsilon<1$ and $R>0$.\\
Then,  the trivial solution of system (\ref{eq21}) is called finite-time stable in probability.
\end{definition}

\begin{definition}\label{def22} ([\ref{Yu2021}])
In Definition \ref{def21}, if (i) or (ii) cannot be satisfied, then
the trivial solution of system (\ref{eq21}) is finite-time instable
in probability.
\end{definition}

\begin{definition}\label{def25} ([\ref{Zhou2018}])
For system $\dot{z}(t)=\mu(t)z(t)$ with $t\in[t_0,\infty)$, if there exists a function
$\beta(\cdot, \cdot)\in \mathcal {K}\mathcal {L}$ such that
$|z(t)|\leq \beta(|z(t_0)|,t-t_0)$, then this system is
globally uniformly asymptotically stable and $\mu(t)$ is called a UASF.
\end{definition}

\begin{remark}\label{rem21}
According to [\ref{Zhou2016}], $\mu(t)$ is a UASF if and only if
there exist constants $c_\mu>0$ and $d_\mu \geq 0$ such that
$\int_{t_0}^t\mu(s)ds\leq d_\mu-c_\mu(t-t_0)$.
\end{remark}

For any given
$V(t, x)\in C^{1,2} ([t_0,\infty) \times \mathcal{R}^r ; \mathcal{R}_+)$
associated with system (\ref{eq21}), the infinitesimal operator $\mathcal {L}$
is defined as
$\mathcal{L}V(t, x)= V_t(t, x)+V_x(t, x)f(t, x)
+\frac{1}{2}\text{Tr}\{g^T(t, x)V_{xx}(t, x)g(t, x)\}$.

The main goal  of this paper is to present  the improved finite-time
stability and instability theorems under the condition that system
(\ref{eq21}) has a solution, which is in  weak solution  or strong
solution  sense. By stochastic analysis technique and UASF, the
established finite-time stability and instability theorems  can
break through some strict constraints imposed on  the existing
finite-time stability and instability results.

\section{Finite-time stability theorems}\label{S3}
In this section, our purpose is to build some improved finite-time
stability theorems of stochastic nonlinear systems. Compared with
some  existing relative  results, the improved finite-time stability
theorems weaken the  condition on $\mathcal{L}V$ and permit
$\mathcal{L}V$ to be indefinite. For this end, we need the following
Lemma \ref{lemma31}, which plays an important role
in the proof of Theorem \ref{theorem31}.

\begin{lemma}\label{lemma31}
For system (\ref{eq21}), if there exist  a function
$V^*(t,x)\in C^{1,2}_0([t_0, \infty) \times U_k; \mathcal{R}_+)$
and a  UASF $\mu^*(t)$ such that for any $\varepsilon\in (0,k)$,
\begin{align}\label{lemma31eq1}
\mathcal {L}V^*(t,x)\leq \mu^*(t),\forall x\in U_{k,\varepsilon},
\end{align}
where $U_k=\{x\in \mathcal{R}^r: |x|<k\}$ and
$U_{k,\varepsilon}=\{x\in \mathcal{R}^r: \varepsilon<|x|<k\}$. Then,
the trivial solution of system (\ref{eq21}) with $x_{0}\in
U_{k,\varepsilon}$ first reaches the boundary of $U_{k,\varepsilon}$
in  finite time a.s..
\end{lemma}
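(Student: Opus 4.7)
The plan is to control the first exit time
\begin{equation*}
\tau_{k,\varepsilon} := \inf\{t\geq t_0 : x(t)\notin U_{k,\varepsilon}\}
\end{equation*}
by treating $V^*$ as an energy functional and exploiting the fact that, along any trajectory staying in $U_{k,\varepsilon}$, the drift of $V^*$ is dominated by a UASF. Intuitively, the time integral of such a $\mu^*$ eventually becomes arbitrarily negative by Remark \ref{rem21}, so the expected value of the nonnegative $V^*$ along the stopped trajectory would be driven below zero unless $\tau_{k,\varepsilon}$ is finite almost surely.

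First I would observe that on the open annular region $U_{k,\varepsilon}$ the function $V^*$ is genuinely $C^{1,2}$, because the only possible singularity at $x=0$ is excluded by $|x|>\varepsilon$. This lets me apply It\^o's formula to $V^*(t,x(t))$ on the stochastic interval $[t_0, t\wedge\tau_{k,\varepsilon}]$, killing the local martingale part by a routine localization argument. Combined with the hypothesis $\mathcal{L}V^*(s,x(s))\leq \mu^*(s)$ for $x(s)\in U_{k,\varepsilon}$, this yields
\begin{equation*}
E\bigl[V^*(t\wedge\tau_{k,\varepsilon},\,x(t\wedge\tau_{k,\varepsilon}))\bigr] \leq V^*(t_0,x_0) + E\!\left[\int_{t_0}^{t\wedge\tau_{k,\varepsilon}}\mu^*(s)\,ds\right].
\end{equation*}

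Next, I invoke the UASF characterization of Remark \ref{rem21} to bound the right-hand integral by $d_{\mu^*}-c_{\mu^*}(t\wedge\tau_{k,\varepsilon}-t_0)$ for some constants $c_{\mu^*}>0$ and $d_{\mu^*}\geq 0$. Dropping the nonnegative left-hand side gives
\begin{equation*}
c_{\mu^*}\,E[t\wedge\tau_{k,\varepsilon}-t_0] \leq V^*(t_0,x_0)+d_{\mu^*},
\end{equation*}
and letting $t\to\infty$ via monotone convergence yields $E[\tau_{k,\varepsilon}-t_0]\leq (V^*(t_0,x_0)+d_{\mu^*})/c_{\mu^*}<\infty$. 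In particular $\tau_{k,\varepsilon}<\infty$ almost surely, which is exactly the claim that the trajectory first reaches $\partial U_{k,\varepsilon}$ in finite time a.s.

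The main obstacle I anticipate is not conceptual but technical: rigorously justifying the Dynkin-type inequality above in the weak/strong-solution setting of Section \ref{S2}. The process is confined to the bounded annulus $U_{k,\varepsilon}$ before $\tau_{k,\varepsilon}$, so no explosion issues arise there; the real care is in simultaneously localizing the local martingale produced by It\^o's formula and passing to expectations, and in verifying that the pointwise bound $\mathcal{L}V^*\leq\mu^*$ can be used under the expectation on the random interval $[t_0,t\wedge\tau_{k,\varepsilon}]$. Once these standard points are handled, the UASF inequality of Remark \ref{rem21} does the rest.
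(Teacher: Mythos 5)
Your argument is correct, and it reaches the conclusion by a more direct route than the paper. Both proofs rest on the same three ingredients: Dynkin's formula for $V^*$ on the stopped interval, the characterization of a UASF in Remark \ref{rem21}, and the nonnegativity of $V^*$. The difference is in how the finiteness of the exit time is extracted. The paper first introduces the time $T_0=\max\{T\geq t_0:\int_{t_0}^{T}\mu^*(t)dt=0\}$, splits the exit time into the cases $\varrho^*_{k,\varepsilon}\leq T_0$ and $\varrho^*_{k,\varepsilon}>T_0$, and on the second piece applies a Chebyshev-type estimate $P\{\varrho_{k,\varepsilon}\geq t\}\leq \varpi_1(t)^{-1}EV^*(T_0,x(T_0))$ with $\varpi_1(t)=-\int_{T_0}^{t}\mu^*(s)ds\to\infty$, then lets $t\to\infty$. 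You avoid the splitting entirely: since $\int_{t_0}^{t\wedge\tau_{k,\varepsilon}}\mu^*(s)ds\leq d_{\mu^*}-c_{\mu^*}(t\wedge\tau_{k,\varepsilon}-t_0)$ holds pathwise, the constant $d_{\mu^*}$ absorbs the transient positive part of $\mu^*$ that the paper handles via $T_0$, and you obtain directly $E[\tau_{k,\varepsilon}-t_0]\leq (V^*(t_0,x_0)+d_{\mu^*})/c_{\mu^*}$ by monotone convergence. This yields the strictly stronger conclusion that the expected exit time is finite, not merely that the exit time is a.s.\ finite, and it is in fact the same mechanism the paper itself uses later in the proof of Theorem \ref{theorem31} to derive (\ref{theorem31eq03}). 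One point to keep in mind for both proofs: $V^*$ is only assumed defined on $[t_0,\infty)\times U_k$, so it need not be bounded as $|x|\uparrow k$; the Dynkin step should strictly be carried out on exit times from annuli compactly contained in $U_{k,\varepsilon}$ and then passed to the limit, which is the technical localization you already flag.
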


\begin{proof}
Let $\varpi(t)=-\int_{t_0}^{t}\mu^*(s)ds$, then
$\varpi(t)\geq c_{\mu^*}(t-t_0)-d_{\mu^*}$ and
$\lim_{t\rightarrow\infty}\varpi(t)=+\infty$ by Remark \ref{rem21}.
Moreover,  if we let $T_0=\max\{T\geq t_0: \int_{t_0}^T\mu^*(t)dt=0\}$,
then $\varpi(T_0)=0$ with $T_0<\infty$ and $\varpi(t)>0$ as $t>T_0$.

Let $\varrho^*_{k,\varepsilon}=\inf\{t\geq t_0: |x(t)|\notin U_{k, \varepsilon}\}$,
then we need to prove that $P\{\varrho^*_{k,\varepsilon}< \infty\}=1$, i.e.,
$P\{\varrho^*_{k,\varepsilon}=\bar{\varrho}_{k,\varepsilon}I_{\{\varrho^*_{k,\varepsilon}\leq T_0\}}
+\varrho_{k,\varepsilon}I_{\{\varrho^*_{k,\varepsilon}> T_0\} }< \infty\}=1$ with
$\bar{\varrho}_{k,\varepsilon}=\inf\{t_0 \leq t\leq T_0: |x(t)|\notin U_{k, \varepsilon}\}$ and
$\varrho_{k,\varepsilon}=\inf\{t> T_0: |x(t)|\notin U_{k, \varepsilon}\}$.

Firstly, we prove the case that
$P\{\varrho^*_{k,\varepsilon}=\varrho_{k,\varepsilon}<\infty \}=1$.
From Dynkin's formula and (\ref{lemma31eq1}), we can get
that for any $x\in U_{k,\varepsilon}$,
\begin{align}\label{lemma31eq2}
EV^*(\varrho_{k,\varepsilon}\wedge t, x(\varrho_{k,\varepsilon}\wedge t))
=&EV^*(T_0, x(T_0))+E\int_{T_0}^{\varrho_{k,\varepsilon}\wedge t}\mathcal {L}V^*(s, x(s))ds\cr
\leq &EV^*(T_0, x(T_0))+E\int_{T_0}^{\varrho_{k,\varepsilon}\wedge t}\mu^*(s)ds,
\end{align}
where $t>T_0$. Define $\varpi_1(t)=-\int_{T_0}^{t}\mu^*(s)ds$, then
$\varpi_1(t)=\varpi(t)>0$ as $t> T_0$ and
$\lim_{t\rightarrow\infty}\varpi_1(t)=\lim_{t\rightarrow\infty}\varpi(t)=+\infty$.
Further, (\ref{lemma31eq2}) can be turned into
$EV^*(\varrho_{k,\varepsilon}\wedge t, x(\varrho_{k,\varepsilon}\wedge t))
\leq EV^*(T_0,x(T_0))-E\varpi_1(\varrho_{k,\varepsilon}\wedge t),$
which means that
\begin{align}\label{lemma31eq4}
E\varpi_1(\varrho_{k,\varepsilon}\wedge t)\leq EV^*(T_0,x(T_0)), ~~ t> T_0.
\end{align}
Note that for any $t>T_0$,
\begin{align}\label{lemma31eq5}
\varpi_1(t)P\{\varrho_{k,\varepsilon}\geq t\}
=\int_{\{\varrho_{k,\varepsilon}\geq t\}}\varpi_1(t)dP
\leq E\varpi_1(\varrho_{k,\varepsilon}\wedge t).
\end{align}
Substituting (\ref{lemma31eq5}) into (\ref{lemma31eq4}) arrives at
\begin{equation}\label{lemma31eq6}
P\{\varrho_{k,\varepsilon}\geq t\}\leq\frac{1}{\varpi_1(t)}EV^*(T_0,x(T_0)),
\end{equation}
where $t> T_0$. In (\ref{lemma31eq6}), let $t\rightarrow\infty$,
then $P\{\varrho_{k,\varepsilon}< \infty\}=1$, that is to say,
$P\{\varrho^*_{k,\varepsilon}=\varrho_{k,\varepsilon}<\infty\}=1$
can be obtained.

Secondly, we  prove another case that
$P\{\varrho^*_{k,\varepsilon}=\bar{\varrho}_{k,\varepsilon}<\infty \}=1$.
In fact, it is natural because
$P\{\varrho^*_{k,\varepsilon}=\bar{\varrho}_{k,\varepsilon}\leq T_0\}=1$.

Hence, we have
$P\{\varrho^*_{k,\varepsilon}=\bar{\varrho}_{k,\varepsilon}I_{\{\varrho^*_{k,\varepsilon}\leq
T_0\}} +\varrho_{k,\varepsilon}I_{\{\varrho^*_{k,\varepsilon}> T_0\}
}< \infty\}=1$, i.e., $P\{\varrho^*_{k,\varepsilon}< \infty\}=1$.
This means that the solution of system (\ref{eq21}) with $x_{0}\in
U_{k,\varepsilon}$ firstly arrives at the boundary of
$U_{k,\varepsilon}$ in finite time almost surely.
\end{proof}

\begin{theorem}\label{theorem31}
Suppose that there exists  a solution to system (\ref{eq21}).
If there are
functions $\underline{\gamma}(\cdot)\in\mathcal{K}_\infty$,
$\bar{\gamma}(\cdot)\in\mathcal{K}_\infty$,
$V(t,x)\in C^{1,2}([t_0, \infty) \times \mathcal{R}^r; \mathcal{R}_+)$
and a UASF $\mu(t)$, and a constant $\kappa \in (0,1)$ such that
\begin{gather}
\label{theorem31eq1}
\underline{\gamma}(|x|) \leq V(t, x)\leq \bar{\gamma}(|x|),\\
\label{theorem31eq2} \mathcal {L}V(t,x))\leq \mu(t)[V(t,x)]^\kappa,
\end{gather}
then  the solution of  system  (\ref{eq21}) is finite-time stable in probability.
\end{theorem}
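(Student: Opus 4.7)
My plan is to reduce Theorem \ref{theorem31} to Lemma \ref{lemma31} via the transformation $W(t,x) := V(t,x)^{1-\kappa}$, and then verify the two parts of Definition \ref{def21} separately. The key preliminary computation is that for $x\ne 0$ (where $V>0$ because $V\ge\underline{\gamma}(|x|)$ with $\underline{\gamma}\in\mathcal{K}_\infty$), It\^o's formula gives
\begin{align*}
\mathcal{L}W
= (1-\kappa)V^{-\kappa}\mathcal{L}V - \tfrac{\kappa(1-\kappa)}{2}V^{-\kappa-1}\|V_x g\|^2
\le (1-\kappa)\mu(t),
\end{align*}
where the last step uses the hypothesis $\mathcal{L}V\le\mu(t)V^\kappa$ and discards the nonpositive Hessian correction. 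Setting $\mu^*(t):=(1-\kappa)\mu(t)$, Remark \ref{rem21} shows $\mu^*$ is again a UASF with constants $c_{\mu^*}=(1-\kappa)c_\mu$ and $d_{\mu^*}=(1-\kappa)d_\mu$, placing the pair $(W,\mu^*)$ exactly in the setting of Lemma \ref{lemma31}.

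For Lyapunov stability in probability (part (ii) of Definition \ref{def21}), I would fix $R>0$ and $\varepsilon\in(0,1)$, introduce the stopping time $\tau_R=\inf\{t\ge t_0:|x(t)|\ge R\}$, and apply Dynkin's formula to $W$ on the stopped process -- localized away from the origin by $\sigma_n=\inf\{t:|x(t)|\le 1/n\}$ and then passed to the limit $n\to\infty$. Combined with the uniform bound $\int_{t_0}^{\tau_R\wedge t}\mu^*(s)\,ds\le d_{\mu^*}$ coming from Remark \ref{rem21}, this yields
\begin{align*}
E\,W(\tau_R\wedge t,x(\tau_R\wedge t))\le\bar{\gamma}(|x_0|)^{1-\kappa}+d_{\mu^*}.
\end{align*}
Since $W(\tau_R,x(\tau_R))\ge\underline{\gamma}(R)^{1-\kappa}$ on $\{\tau_R\le t\}$, Markov's inequality bounds $P\{\tau_R\le t\}$; choosing $\delta(\varepsilon,R)>0$ with $\bar{\gamma}(\delta)^{1-\kappa}$ sufficiently small and letting $t\to\infty$ then gives the required $P\{|x(t)|<R,\ t\ge t_0\}\ge 1-\varepsilon$.

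For almost-sure finite-time reachability of the origin (part (i) of Definition \ref{def21}), I would apply Lemma \ref{lemma31} with $V^*=W$ and $\mu^*$ as above on the annuli $U_{k,1/n}$, choosing $k$ large enough that $|x_0|<k$ and letting $n\uparrow\infty$. Each exit time $\varrho^*_{k,1/n}$ is a.s.\ finite; the stability estimate of the previous paragraph controls the probability of escape to the outer boundary $|x|=k$, forcing the solution to enter every neighborhood of the origin in finite time with probability one. Combined with the absorbing property at $x=0$ (guaranteed by $f(t,0)=g(t,0)=0$), this yields $P\{\rho_{x_0}<\infty\}=1$.

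The main technical obstacle I foresee is the localization at the origin: because $W=V^{1-\kappa}$ is only $C^{1,2}$ off $x=0$, Dynkin's formula cannot be applied directly, and a careful passage through $\sigma_n\to\infty$ is required to secure the moment estimate and the monotone convergence of the exit times. Tightly linked to this is the upgrade from ``the trajectory enters every neighborhood of $0$ in finite time'' to ``$x(\rho)=0$ at some finite $\rho$,'' which I would handle by combining the monotone family $\{\varrho^*_{k,1/n}\}_n$ from Lemma \ref{lemma31} with the absorbing behavior at the origin.
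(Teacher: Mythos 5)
Your proposal is correct and follows essentially the same route as the paper: the paper likewise passes to $W(V)=\int_0^V s^{-\kappa}ds=\tfrac{1}{1-\kappa}V^{1-\kappa}$, shows $\mathcal{L}W\le\mu(t)$ by discarding the nonpositive Hessian correction, localizes on the annuli $U_{k,1/k}$ so that Lemma \ref{lemma31} yields a.s.\ finite exit times and $\rho_k\to\rho_{x_0}$, and then combines Dynkin's formula with Chebyshev's inequality and the UASF bound $\int_{t_0}^{t}\mu(s)\,ds\le d_\mu-c_\mu(t-t_0)$ for stability in probability. The only cosmetic differences are your normalization $\mu^*=(1-\kappa)\mu$ versus the paper's unscaled $W$, and that the paper extracts the explicit moment bound $E(\rho_{x_0})\le t_0+d_\mu/c_\mu+\bar{\gamma}^{1-\kappa}(|x_0|)/(c_\mu(1-\kappa))$ directly from the Dynkin estimate rather than arguing via escape probabilities.
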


\begin{proof}
If system initial value $x_{0}=0$, then $x(t)\equiv0$ is
the solution of system  (\ref{eq21}),
where $f(0,t)\equiv0$ and $g(0,t)\equiv0$ are considered.
In the following, we only discuss the case that
$x(t_0)=x_0\in\mathcal{R}^r\backslash\{0\}$.

Defining ${W}(V)=\int_{0}^V s^{-\kappa}ds$ with $V\in(0, +\infty)$
along with system  (\ref{eq21}) and using Lemma 3.1 in
[\ref{Zhao2015}], then
\begin{align}\label{theorem31eq01}
\mathcal {L} {W}(V(t, x))
=\frac{\mathcal {L}V}{V^{\kappa}}
-\frac{\kappa }{2}\frac{\textmd{Tr}\{(V_xg)^TV_xg\}}{V^{\kappa+1}}
\leq V^{-\kappa}\mathcal {L}V
\leq \mu(t), ~~x\in\mathcal{R}^r\backslash\{0\}.
\end{align}
Let $\rho_k=\varrho_{k,\frac{1}{k}}=\inf\{t\geq t_0: |x(t)|\notin U_{k, \frac{1}{k}}\}$
with $U_{k, \frac{1}{k}}=\{x\in \mathcal{R}^r: \frac{1}{k}<|x|<k\}$ and
$k\in \{2, 3, \cdots\}$, then by (\ref{theorem31eq01}),  we must have
\begin{align}\label{theorem31eq0101}
\mathcal {L} {W}(V(t, x))
\leq \mu(t), ~~x\in U_{k, \frac{1}{k}}.
\end{align}
By Dynkin's formula, it follows from (\ref{theorem31eq0101}) that
\begin{align}\label{theorem31eq02}
- {W}(V(t_0,x_0))
\leq E {W}(V(t\wedge \rho_k, x(t\wedge \rho_k)))
- {W}(V(t_0,x_0))
\leq E\int_{t_0}^{t\wedge \rho_k}\mu(s)ds.
\end{align}
Because $\mu(t)$ is a UASF, there exist constants
$c_\mu>0$ and $d_\mu \geq 0$ such that
$\int_{t_0}^t\mu(s)ds\leq d_\mu-c_\mu(t-t_0)$.
Hence, (\ref{theorem31eq02}) can  be turned into
\begin{align*}%
-\frac{1}{1-\kappa}V^{1-\kappa}(t_0, x_0)=&-\int_{0}^{V(t_0, x_0)} s^{-\kappa}ds
\leq -{c_\mu}E(t\wedge \rho_k)+{c_\mu}t_0+{d_\mu},
\end{align*}
that is to say,
\begin{align}\label{theorem31eq03}
E(t\wedge \rho_k)\leq t_0+\frac{d_\mu}{c_\mu}
+\frac{\bar{\gamma}^{1-\kappa}(|x_0|)}{c_\mu(1-\kappa)},
\end{align}
where (\ref{theorem31eq1}) is considered.

Let $ {W}=V^*$ and $\mu(t)=\mu^*(t)$ in (\ref{theorem31eq01}), then
we have $\rho_k \rightarrow \rho_\infty=\rho_{x_0}$ a.s. as $k
\rightarrow \infty$ and $ P\{\rho_{x_0}<\infty\}=1$ by Lemma
\ref{lemma31} and Definition \ref{def21}. Further, let $t=k$ and
$k\rightarrow \infty$, then $k\wedge\rho_k \rightarrow \rho_{x_0}$
a.s.. Accordingly, (\ref{theorem31eq03}) can be changed into
\begin{equation}\label{theorem31eq0404}
E(\rho_{x_0})\leq t_0+\frac{d_\mu}{c_\mu}
+\frac{\bar{\gamma}^{1-\kappa}(|x_0|)}{c_\mu(1-\kappa)}.
\end{equation}
This implies that the trivial solution of system (\ref{eq21})
is finite-time attractive in probability.

Applying Dynkin's formula for (\ref{theorem31eq01}),
together with (\ref{theorem31eq1}), leads to
\begin{align}\label{theorem31eq04}
E[\underline{\gamma}^{1-\kappa}(|x(t)|)I_{\{t\in[t_0, \rho_{x_0})\}}]
\leq E[V^{1-\kappa}(t, x(t))I_{\{t\in[t_0, \rho_{x_0})\}}]
\leq \bar{\gamma}^{1-\kappa}(|x_0|)+d_\mu(1-\kappa).
\end{align}
Meanwhile,
\begin{equation}\label{theorem31eq05}
E[\underline{\gamma}^{1-\kappa}(|x(t)|)I_{\{t\in[\rho_{x_0}, \infty)\}}]\equiv 0.
\end{equation}
Hence, we have
\begin{align}\label{theorem31eqs07}
E[\underline{\gamma}^{1-\kappa}(|x(t)|)]
=&E[\underline{\gamma}^{1-\kappa}(|x(t)|)I_{\{t\in[t_0, \rho_{x_0})\}}
+\underline{\gamma}^{1-\kappa}(|x(t)|)I_{\{t\in[\rho_{x_0}, \infty)\}}]\cr
=&E[\underline{\gamma}^{1-\kappa}(|x(t)|)I_{\{t\in[t_0, \rho_{x_0})\}}]
+E[\underline{\gamma}^{1-\kappa}(|x(t)|)I_{\{t\in[\rho_{x_0}, \infty)\}}]\cr
\leq &\bar{\gamma}^{1-\kappa}(|x_0|)+d_\mu(1-\kappa),~~t\geq t_0
\end{align}
from (\ref{theorem31eq04}) and  (\ref{theorem31eq05}).

For any $\varepsilon\in(0,1)$,
let $\bar{R}>\varepsilon^{-1}\bar{\gamma}^{1-\kappa}(|x_0|)+\varepsilon^{-1}d_\mu(1-\kappa)$,
then from (\ref{theorem31eqs07}) and Chebyshev's inequality, we can get that
\begin{align*}
P\{\underline{\gamma}^{1-\kappa}(|x(t)|)\geq \bar{R}, ~~t\geq t_0\}
\leq E[\underline{\gamma}^{1-\kappa}(|x(t)|)]\bar{R}^{-1}
<\varepsilon,
\end{align*}
which means that
\begin{equation}\label{theorem31eqs08}
P\{|x(t)|< R,~~t\geq t_0\}\geq 1-\varepsilon,
\end{equation}
where $R=\underline{\gamma}^{-1}(\bar{R}^{\frac{1}{1-\kappa}})$,
$|x_0|<\delta(\varepsilon, R)$ and $\delta(\varepsilon, R)
=\bar{\gamma}^{-1}([\varepsilon\underline{\gamma}^{1-\kappa}(R)
-d_\mu(1-\kappa)]^{\frac{1}{1-\kappa}})$. This signifies that the
trivial solution of system (\ref{eq21}) is stable in probability.
\end{proof}
\begin{theorem}\label{theorem32}
Suppose that there is a  unique solution to system (\ref{eq21}).
If there are
functions $\underline{\gamma}\in\mathcal{K}_\infty$, $\bar{\gamma}\in\mathcal{K}_\infty$,
a UASF $\mu(t)$ and $V(t,x)\in C^{1,2}([t_0, \infty) \times \mathcal{R}^r; \mathcal{R}_+)$
such that
\begin{gather}
\label{theorem32eq1}
\underline{\gamma}(|x|) \leq V(t, x)\leq \bar{\gamma}(|x|), \\
\label{theorem32eq2} \mathcal {L}V(t,x)\leq \mu(t),
\end{gather}
then  the trivial solution of  system  (\ref{eq21})
is finite-time stable in probability.
\end{theorem}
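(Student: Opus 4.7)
The plan is to follow the two-part scheme of Theorem \ref{theorem31} (first finite-time attractivity, then stability in probability), but because the hypothesis $\mathcal{L}V\le\mu(t)$ already matches the form required by Lemma \ref{lemma31}, the integrating substitution $W(V)=\int_0^V s^{-\kappa}\,ds$ used in Theorem \ref{theorem31} is no longer needed and can be dropped.

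If $x_0=0$, then by uniqueness and $f(t,0)=g(t,0)=0$ the conclusion is immediate. For $x_0\neq 0$, I would define $\rho_k=\inf\{t\geq t_0:|x(t)|\notin U_{k,1/k}\}$ and apply Lemma \ref{lemma31} directly with $V^*=V$ and $\mu^*=\mu$ to obtain $\rho_k<\infty$ a.s. By Dynkin's formula together with (\ref{theorem32eq2}),
\[
0\le EV(t\wedge\rho_k,x(t\wedge\rho_k))\le V(t_0,x_0)+E\!\int_{t_0}^{t\wedge\rho_k}\!\mu(s)\,ds,
\]
and combining this with the UASF characterisation $\int_{t_0}^{t}\mu(s)\,ds\le d_\mu-c_\mu(t-t_0)$ from Remark \ref{rem21} yields $c_\mu E(t\wedge\rho_k)\le c_\mu t_0+d_\mu+\bar\gamma(|x_0|)$. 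Letting $t\to\infty$ and then $k\to\infty$ (noting $\rho_k\nearrow\rho_{x_0}$ by the uniqueness of solutions), monotone convergence produces the explicit settling-time bound
\[
E\rho_{x_0}\le t_0+\frac{d_\mu+\bar\gamma(|x_0|)}{c_\mu},
\]
and in particular $P\{\rho_{x_0}<\infty\}=1$, establishing the attractivity part of Definition \ref{def21}.

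For stability in probability, I would again invoke Dynkin's formula, now restricted to $\{t<\rho_{x_0}\}$, to obtain $E[V(t,x(t))I_{\{t<\rho_{x_0}\}}]\le\bar\gamma(|x_0|)+d_\mu$; on the complementary event $\{t\ge\rho_{x_0}\}$, uniqueness forces $x(t)\equiv 0$ after the settling time, so $E[\underline\gamma(|x(t)|)I_{\{t\ge\rho_{x_0}\}}]=0$. Adding the two contributions and using the lower bound in (\ref{theorem32eq1}) gives the uniform-in-$t$ moment estimate $E\underline\gamma(|x(t)|)\le\bar\gamma(|x_0|)+d_\mu$. A Chebyshev argument of the same form as in Theorem \ref{theorem31} then produces, for any prescribed $\varepsilon\in(0,1)$ and $R>0$, a $\delta(\varepsilon,R)>0$ such that $|x_0|<\delta(\varepsilon,R)$ implies $P\{|x(t)|<R,\ t\ge t_0\}\ge 1-\varepsilon$.

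I do not anticipate a serious obstacle, since the absence of the $V^\kappa$ factor in (\ref{theorem32eq2}) simplifies, rather than complicates, the estimates compared to Theorem \ref{theorem31}. The only delicate point is the post-settling behaviour, where the hypothesised uniqueness of solutions (strictly stronger than the mere existence assumed in Theorem \ref{theorem31}, and the reason uniqueness is listed in the theorem statement) is precisely what guarantees that the trajectory remains at the origin once it reaches it, so that the moment estimate can be extended uniformly in $t$.
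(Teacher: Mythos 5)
Your proposal is correct and follows essentially the same route as the paper: localize with a stopping time, apply Dynkin's formula together with the UASF bound $\int_{t_0}^t\mu(s)\,ds\le d_\mu-c_\mu(t-t_0)$ to get $c_\mu E(t\wedge\cdot)\le c_\mu t_0+d_\mu+\bar\gamma(|x_0|)$, pass to the limit to obtain $E\rho_{x_0}<\infty$, and then run the Theorem \ref{theorem31} indicator-plus-Chebyshev argument (with $\kappa=0$) for stability in probability. The only cosmetic difference is that you localize with the annulus exit times $\rho_k$ and Lemma \ref{lemma31} as in Theorem \ref{theorem31}, whereas the paper uses $t\wedge\sigma_n\wedge\rho_{x_0}$ and two applications of Fatou's lemma; the estimates are identical.
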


\begin{proof}
Similar to Theorem \ref{theorem31},  we also only consider the case that
$x(t_0)=x_0\in\mathcal{R}^r\backslash\{0\}$ in the following.

Applying Dynkin's formula for (\ref{theorem32eq2})
and the property of  the UASF $\mu(t)$, we have
\begin{align}\label{theorem32eqa01}
EV(t\wedge \sigma_n \wedge \rho_{x_{0}},
x(t\wedge \sigma_n \wedge \rho_{x_{0}}))
\leq V(t_0, x_0)-c_\mu E(t\wedge \sigma_n \wedge \rho_{x_{0}}-t_0)+d_\mu,
\end{align}
where $\sigma_n=\inf\{t\geq t_0:|x(t)|>n\}$ and $\rho_{x_{0}}$
represents the stochastic settling time. Note that
$\sigma_n\rightarrow\infty$ as $n\rightarrow\infty$. So by  Fatou's
lemma, (\ref{theorem32eq1}) and (\ref{theorem32eqa01}), we have
\begin{align*}
c_\mu E(t \wedge \rho_{x_0})
\leq EV(t\wedge \rho_{x_0},x(t\wedge \rho_{x_0}))
+c_\mu E(t \wedge \rho_{x_0})
\leq \bar{\gamma}(|x_0|)+c_\mu t_0+d_\mu,
\end{align*}
which indicates that
\begin{align}\label{theorem32eqa02}
E(t \wedge \rho_{x_0})
\leq \frac{\bar{\gamma}(|x_0|)}{c_\mu}+\frac{d_\mu}{c_\mu}+t_0.
\end{align}
Applying  Fatou's lemma for  (\ref{theorem32eqa02}), we have
\begin{align}\label{theorem32eqa03}
E(\rho_{x_0})
\leq \frac{\bar{\gamma}(|x_0|)}{c_\mu}+\frac{d_\mu}{c_\mu}+t_0,
\end{align}
which signifies that  $P\{\rho_{x_0}<\infty\}=1$.

The proof of stability in probability is similar to that of
Theorem \ref{theorem31} and is thus omitted here.
\end{proof}

\begin{remark}\label{rem33}
(i) Theorem \ref{theorem32} signifies that Theorem \ref{theorem31}
also holds for $\kappa=0$, that is to say, Theorem \ref{theorem31}
holds for any $\kappa\in [0,1)$. In the proof of Theorem
\ref{theorem31}, (\ref{theorem31eq0404}) also holds for $\kappa\in
[0,1)$ since (\ref{theorem32eqa03}) is consistent with
(\ref{theorem31eq0404}) with $\kappa=0$. (ii) If $\mu(t)=-c(c>0)$
that is a UASF, then Theorem \ref{theorem31} and Theorem
\ref{theorem32} are in line with Theorem 3.1 and Theorem 3.2 in
[\ref{Yin2011}], respectively. Note that Theorem 3.1 in
[\ref{Yin2011}] does not hold for $\kappa=1$ according to Example
3.3 in [\ref{Yin2011}]. Therefore, Theorem \ref{theorem31} also does
not hold for $\kappa=1$.
\end{remark}

\begin{remark}\label{rem34}
In some existing results, $\mathcal {L}V$ must be negative definite
such as [\ref{Yin2011}, \ref{Zhao2018}] or negative semi-definite
such as [\ref{Yu2019}]. Theorem \ref{theorem31} and Theorem
\ref{theorem32} generalize the existing results and relax some
constraints on $\mathcal {L}V$ such that $\mathcal {L}V$ can be
negative or positive.
\end{remark}

\textbf{Example 1.}
For stochastic nonlinear system
\begin{align}\label{example21}
dx(t)=0.5\mu_1(t)x^{\frac{1}{3}}(t)dt-0.5x(t)dt
+x(t)\cos(x(t))dW(t),
\end{align}
where  $\mu_1(t)=2/(1+t)-|\sin2t|$ which is a UASF by [\ref{Zhou2016b}] and
$W(t)\in \mathcal{R}$ is a standard Wiener process.
Note that
\begin{align}\label{example211}
|f(t, x)|^2+|g(t, x)|^2
=&|0.5\mu_1(t)x^{\frac{1}{3}}(t)-0.5x(t)|^2+|x(t)\cos(x(t))|^2\cr
\leq &0.25\mu_1^2(t)x^{\frac{2}{3}}(t)+1.25|x(t)|^2+0.5|\mu_1(t)|x^{\frac{4}{3}}(t)\cr
\leq &x^{\frac{2}{3}}(t)+\frac{5}{4}|x(t)|^2+x^{\frac{4}{3}}(t)\cr
\leq &\frac{1}{3}|x(t)|^2+\frac{2}{3}+\frac{5}{4}|x(t)|^2+\frac{2}{3}|x(t)|^2+\frac{1}{3}\cr
\leq &H(|x(t)|^2+1),
\end{align}
where $H=9/4$, Lemma 3 in [\ref{Huang2016}] and
$\mu_1(t)=2/(1+t)-|\sin2t|\leq 2$ are used. Hence,
system (\ref{example21}) has a continuous solution by Lemma \ref{lemma22}.
Let $V_1(x)=x^2$, then we have
\begin{align}
\mathcal {L}V_1(x(t))=&2x(t)(0.5\mu_1(t)x^{\frac{1}{3}}(t)-0.5x(t))
+x^2(t)\cos^2(x(t))\cr
\leq &\mu_1(t)x^{\frac{4}{3}}(t)\cr
=&\mu_1(t)V_1^{\frac{2}{3}}(x(t)).
\end{align}
This implies that the trivial solution of system (\ref{example21}) is
finite-time stable in probability by Theorem \ref{theorem31}.

\begin{figure}[htb]
\centering
\includegraphics[height=5cm, width=15cm]{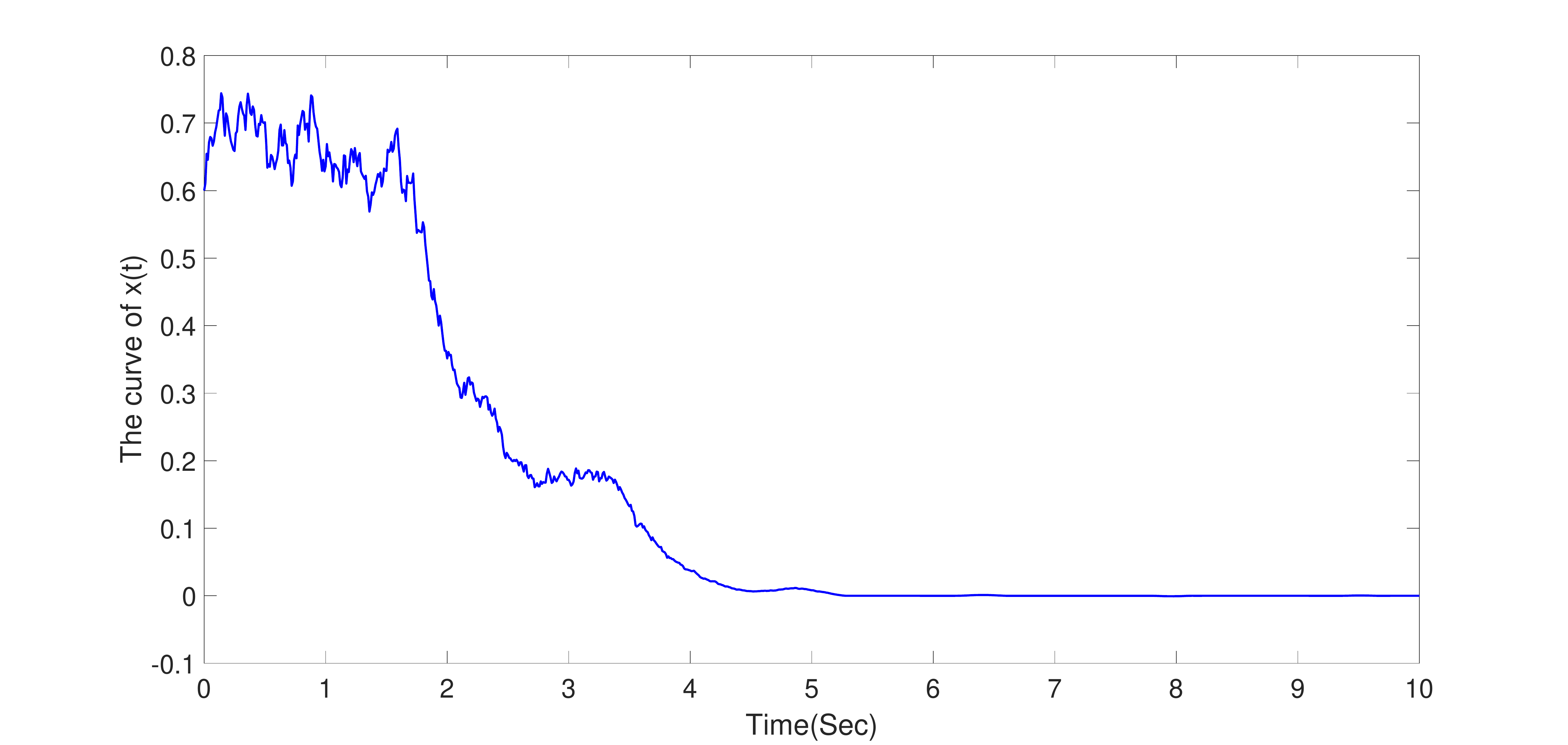}
\center{ {\small Figure 1.  The state curve of system (\ref{example21}).}}
\end{figure}

In simulation, let $x_0=0.6$, then the state curve of system (\ref{example21})
is shown in Figure 1, which shows that system
state trajectory beginning from non-zero initial
value converges to the origin in finite time.

\textbf{Example 2.}
For stochastic nonlinear system
\begin{equation}\label{example22}
\begin{cases}
dx_1(t)=f_1(t, x)dt+g_1(t, x)dW_1(t),\\
dx_2(t)=f_2(t, x)dt+g_2(t, x)dW_2(t),
\end{cases}
\end{equation}
where $f_1(t, x)=-x_1(t)+(\psi(t)-0.5)x_1^{\frac{4}{5}}(t)$,
$g_1(t, x)=\sqrt{2}x_2(t)\cos(x_1(t))$,
$f_2(t, x)=-x_2(t)+(\psi(t)-0.5)x_2^{\frac{4}{5}}(t)$,
$g_2(t, x)=\sqrt{2}x_1(t)\sin(x_2(t))$, $\psi(t)=t\sin t/(1+t)$,
$W_i(t)\in \mathcal{R}(i=1,2)$ 
are mutually independent standard Wiener processes.
System (\ref{example22}) satisfies the condition in Lemma \ref{lemma22},
so there is a continuous solution to system (\ref{example22}).

\begin{figure}[htb]
 \centering
  \includegraphics[height=5cm, width=15cm]{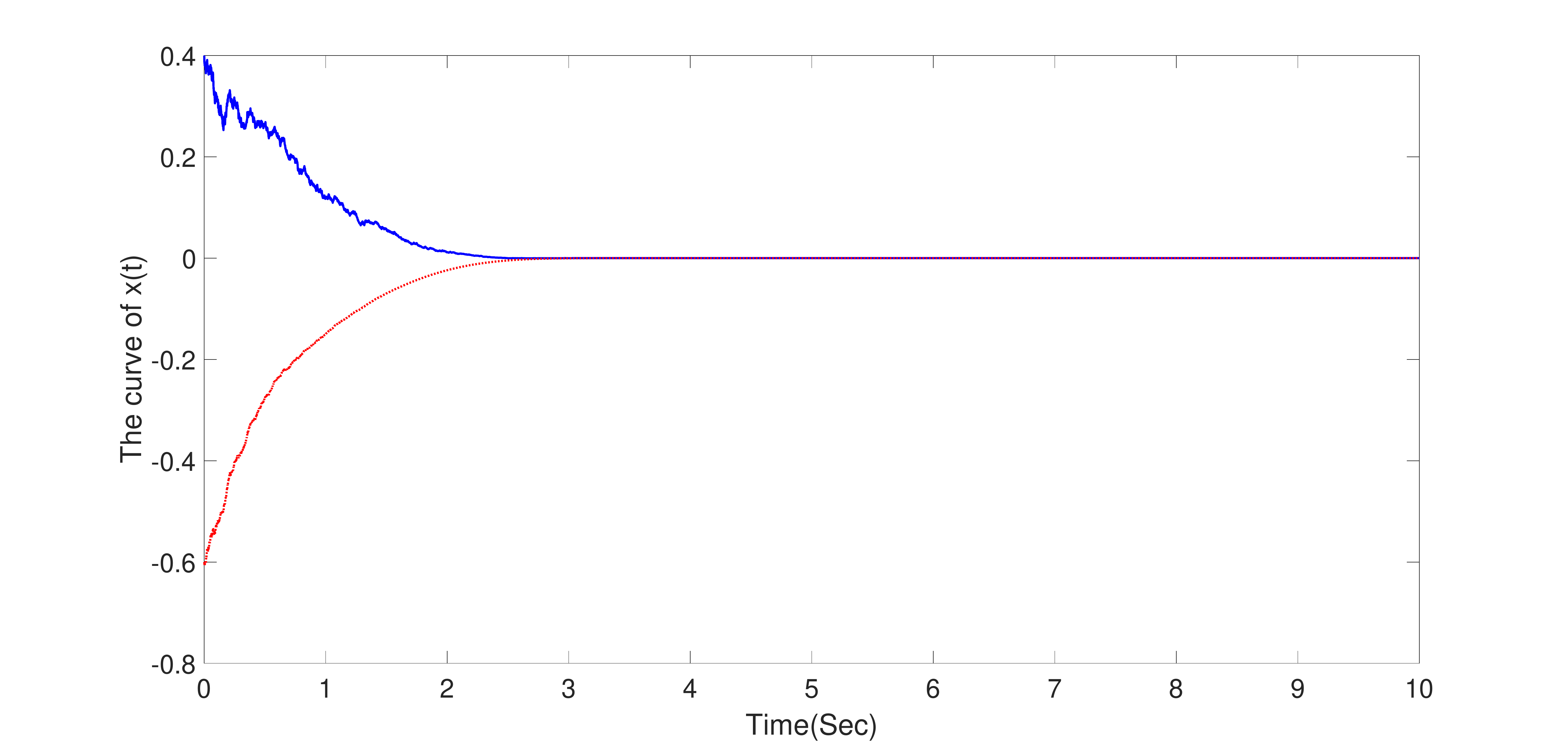}
\center{ {\small Figure 2.  The trajectory of state $x(t)$.}}
\end{figure}

Let  $V_2(x)=x_{1}^2+x_{2}^2$, then
\begin{align*}
\mathcal {L}V_2(x)\leq &2(\psi(t)-0.5)(x_{1}^{\frac{9}{5}}+x_{2}^{\frac{9}{5}})
=2\bar{\mu}_2(t)[(x_{1}^2)^{\frac{9}{10}}+(x_{2}^2)^{\frac{9}{10}}]
\leq \mu_2(t)V_2^{\frac{9}{10}}(x),
\end{align*}
where Lemma 4 in [\ref{Huang2016}] is used,
$\bar{\mu}_2(t)=\psi(t)-0.5$ and
\begin{equation*}
{\mu}_2(t)=
\begin{cases}
2^{\frac{11}{10}}\bar{\mu}_2(t) ~~&\textmd{if}~~ \psi(t)\geq 0.5\\
2\bar{\mu}_2(t)~~&\textmd{if}~~ \psi(t)<0.5\\
\end{cases}.
\end{equation*}
Since $\bar{\mu}_2(t)=\psi(t)-0.5$ is a UASF and
$\int_{t_0}^t\bar{\mu}_2(s)ds\leq 5-0.5(t-t_0)$, it is easy to
verify that ${\mu}_2(t)$ is also a UASF. Therefore, the solution of
system (\ref{example22}) with $x_0\in\mathcal{R}^2\backslash\{0\}$
is finite-time stable in probability by Theorem \ref{theorem31} with
$\kappa=9/10$. For simulation, let $x_0=(0.4, -0.6)^T$, then the
trajectories of the state $x_1(t)$ and $x_2(t)$ are described in
Figure 2. This signifies that  the trivial solution of system
(\ref{example22}) is finite-time stable in probability.

\begin{remark}\label{rem34}
(i) Since system (\ref{example21}) and (\ref{example22}) satisfy the
condition of Lemma \ref{lemma22}, system (\ref{example21}) and
(\ref{example22}) have a continuous solution, respectively. We can
also  prove that Example 1 and Example 2 satisfy Lemma
\ref{lemma23}. For example, we know that $f(x, t)$ and $g(x, t)$ in
Example 1 are locally bounded in $x$ and are   uniformly bounded  in
$t$ by (\ref{example211}). Furthermore, let
$U_1(x)=V_1(x)=x^2=|x|^2$
in Example 1, then $\mathcal {L}U_1(x(t))
=\mu_1(t)U_1^{\frac{2}{3}}(x(t))\leq 2U_1^{\frac{2}{3}}(x(t))\leq
{4}/{3}U_1+2/3 \triangleq l(t)U_1+2/3$. (ii) Example 1 and Example 2
show that the finite-time stability of stochastic nonlinear systems
can be analyzed by Theorem \ref{theorem31}. (iii) In order to
further show the validity of Theorem \ref{theorem31}, we focus on
finite-time stabilization problem for a stochastic nonlinear system,
which is given in Section V.
\end{remark}

\section{Finite-time instability theorem}\label{S4}
In this section, we discuss some sufficient conditions which ensure
that stochastic nonlinear system (\ref{eq21}) is finite-time
instable in probability. Compared with some existing stochastic
finite-time instability results (such as [\ref{Yin2011}] and
[\ref{Yu2021}]), the given finite-time instability criterion relaxes
the  condition  imposed  on $\mathcal{L}V$ and allows $\mathcal{L}V$
to be indefinite.

\begin{theorem}\label{theorem41}
Assuming that there exists a solution to system (\ref{eq21})
with any given non-zero initial value. If there exist a function
$V(t,x)\in C^{1,2}([t_0-\tau, \infty) \times \mathcal{R}^r; \mathcal{R}_+)$,
$\mathcal{K}_\infty$ functions $\underline{\gamma}$ and $\bar{\gamma}$,
and a UASF $\mu(t)$ such that
\begin{gather}
\label{theorem41eq1}
\underline{\gamma}(|x|) \leq V(t, x)\leq \bar{\gamma}(|x|),\\
\label{theorem41eq2}
\mathcal {L}V(t,x)= \mu(t)V(t, x), \\
\label{theorem41eq3}
|V_x(t,x)g(t, x)|^2\leq a(t)V^2(t,x),
\end{gather}
where  $a(t)\geq 0$ and $\int_{t_0}^\infty a(t)dt<\infty$, then the
trivial solution of  system  (\ref{eq21}) is  finite-time instable
in probability.
\end{theorem}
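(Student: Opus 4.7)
Fix $x_0\in\mathcal{R}^r\backslash\{0\}$. By Definition~\ref{def22}, to establish finite-time instability it suffices to falsify condition~(i) of Definition~\ref{def21}; I will in fact prove $P(\rho_{x_0}=\infty)=1$ for the settling time $\rho_{x_0}=\inf\{t\geq t_0:x(t)=0\}$. Since $\underline{\gamma}\in\mathcal{K}_\infty$, (\ref{theorem41eq1}) gives $V(t,x)>0$ for every $x\neq 0$, so on $[t_0,\rho_{x_0})$ the process $V(t,x(t))$ is strictly positive, and by It\^o's formula together with (\ref{theorem41eq2}) it solves the linear SDE
\[
dV(t,x(t))=\mu(t)V(t,x(t))\,dt+V_x(t,x(t))g(t,x(t))\,dW(t).
\]
The multiplicative structure imposed by (\ref{theorem41eq2}) and (\ref{theorem41eq3}) makes $V(t,x(t))$ a geometric-type positive process that cannot reach zero in finite time.

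I localize via the stopping times $\tau_k=\inf\{t\geq t_0:V(t,x(t))\leq 1/k\}$ and set $\sigma(t)=V_x(t,x(t))g(t,x(t))/V(t,x(t))$ on $[t_0,\tau_k]$, so by (\ref{theorem41eq3}), $|\sigma(t)|^2\leq a(t)$. Applying It\^o's formula to $\ln V(t,x(t))$ on $[t_0,\tau_k]$ yields
\[
V(t\wedge\tau_k,x(t\wedge\tau_k))=V(t_0,x_0)\exp\!\Bigl(\int_{t_0}^{t\wedge\tau_k}\mu(s)\,ds-\tfrac{1}{2}\int_{t_0}^{t\wedge\tau_k}|\sigma(s)|^2\,ds+N^k(t)\Bigr),
\]
where $N^k(t)=\int_{t_0}^{t\wedge\tau_k}\sigma(s)\,dW(s)$ has quadratic variation bounded by $\int_{t_0}^\infty a(s)\,ds<\infty$ uniformly in $k$, so $N^k$ is an $L^2$-bounded martingale and Doob's maximal inequality gives $E[\sup_{s\geq t_0}|N^k(s)|^2]\leq 4\int_{t_0}^\infty a(s)\,ds$ uniformly in $k$.

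Fix any $T\in[t_0,\infty)$. On the event $\{\tau_k\leq T\}$ the exponential identity rearranges to
\[
\ln(1/k)\geq \ln V(t_0,x_0)-\int_{t_0}^{T}|\mu(s)|\,ds-\tfrac{1}{2}\int_{t_0}^\infty a(s)\,ds-\sup_{s\leq T}|N^k(s)|.
\]
The first three terms on the right are finite and deterministic, while the last is bounded in probability uniformly in $k$ by the Doob estimate. Since $\ln(1/k)\to -\infty$, Chebyshev's inequality forces $P(\tau_k\leq T)\to 0$ as $k\to\infty$. Because $\tau_k\nearrow \rho_{x_0}$ almost surely, one obtains $P(\rho_{x_0}\leq T)=0$ for every finite $T$, hence $P(\rho_{x_0}=\infty)=1$, contradicting condition~(i) of Definition~\ref{def21} as required.

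The principal obstacle I anticipate is the stopping-time bookkeeping needed to justify the logarithmic transformation (which requires $V$ to stay bounded away from $0$ on the localizing intervals) and to obtain an $L^2$ bound on $N^k$ that is uniform in $k$. Both hinge on the integrability hypothesis $\int_{t_0}^\infty a(s)\,ds<\infty$ in (\ref{theorem41eq3}), which keeps the diffusion too weak to drive $V$ randomly to zero even though the drift $\mu(t)V$ forces $V$ to decay exponentially on average by the UASF property.
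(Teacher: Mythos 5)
Your argument is essentially correct, but it takes a genuinely different route from the paper. The paper sets $\tilde{V}(t,x)=e^{-\int_{t_0}^{t}\mu(s)ds}V(t,x)$, notes $\mathcal{L}\tilde{V}=0$, and then uses the Burkholder--Davis--Gundy inequality together with Gronwall's inequality (both driven by $\int_{t_0}^{\infty}a(t)dt<\infty$) to bound $E[\sup_t\tilde{V}^2]$; this makes $\tilde{V}(t,x(t))$ a uniformly integrable nonnegative martingale, and the martingale convergence theorem plus the argument of Theorem 4.1 in [\ref{Yin2011}] yields $P\{\rho_{x_0}<\infty\}<1$ (the paper also separately verifies stability in probability, i.e.\ part (ii) of Definition \ref{def21}, which is not needed for the conclusion via Definition \ref{def22}). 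You instead pass to $\ln V$, represent $V(t,x(t))$ as a stochastic exponential whose quadratic variation is deterministically bounded by $\int_{t_0}^{\infty}a(s)ds$, and use Doob's $L^2$ maximal inequality to show the exponent cannot reach $-\infty$ in finite time; this is more elementary (no BDG, no Gronwall, no martingale convergence) and yields the strictly stronger conclusion $P\{\rho_{x_0}=\infty\}=1$ rather than merely $P\{\rho_{x_0}<\infty\}<1$. Either conclusion falsifies condition (i) of Definition \ref{def21}, so both suffice. Two bookkeeping points you should make explicit when writing this up: on $\{\tau_k\leq T\}$ you are using path continuity of $V(t,x(t))$ to assert $V(\tau_k,x(\tau_k))=1/k$ exactly, which is what puts $\ln(1/k)$ on the left of your inequality; and the localization should in principle also stop at $\sigma_n=\inf\{t\geq t_0:|x(t)|>n\}$ (harmless here since a global solution is assumed), after which $\tau_k\nearrow\rho_{x_0}$ follows again from continuity of $V$ along the paths.
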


\begin{proof}
Firstly, let
$\tilde{V}(t,x)=e^{-\int_{t_0}^t\mu(s)ds}V(t,x)$ along system
(\ref{eq21}) and apply It\^{o}'s formula for $\tilde{V}$, then
\begin{align}\label{theorem41eq01}
d\tilde{V}=\mathcal{L}\tilde{V}dt+\tilde{V}_x^Tg(t, x(t))dW(t),
\end{align}
where $\tilde{V}_x=e^{-\int_{t_0}^t\mu(s)ds}V_x(t,x)$ and
$\mathcal{L}\tilde{V}=e^{-\int_{t_0}^t\mu(s)\,ds}(\mathcal{L}V(t,x)-\mu(t)V(t,x))= 0.$
It follows from (\ref{theorem41eq01}) that
\begin{align}\label{theorem41eq02}
e^{-\int_{t_0}^{t}\mu(s)ds}V(t, x(t))-V(t_0,x_0)
= \int_{t_0}^{t}e^{-\int_{t_0}^s\mu(v)dv}V_x^TgdW(s).
\end{align}
From (\ref{theorem41eq3}), (\ref{theorem41eq02}) and
Burkholder-Davis-Gundy inequality
(Theorem 7.3 in [Chapter 1, \ref{Mao2007}]), we can obtain that
\begin{align}\label{theorem41eq03}
&E\Big[\sup_{t_0 \leq t \leq T}
\Big(e^{-\int_{t_0}^{t}\mu(s)ds}V(t, x(t))\Big)^2\Big]\cr
\leq &8E\int_{t_0}^{T}a(t)\Big(e^{-\int_{t_0}^t\mu(v)dv}V(t,x(t))\Big)^2dt
+2V^2(t_0, x_0)\cr
\leq &8\int_{t_0}^{T}a(t)E\Big[\sup_{t_0 \leq s \leq T}
\Big(e^{-\int_{t_0}^s\mu(v)dv}V(s,x(s))\Big)^2\Big]dt
+2V^2(t_0, x_0).
\end{align}
Further, by Gronwall's inequality
(Theorem 8.1 in [Chapter 1, \ref{Mao2007}]), it can
be derived from (\ref{theorem41eq03}) that
\begin{align}\label{theorem41eq04}
E\Big[\sup_{t_0 \leq t \leq T}
\Big(e^{-\int_{t_0}^{t}\mu(s)ds}V(t, x(t))\Big)^2\Big]
\leq 2e^{\int_{t_0}^{\infty}8a(t)dt}V^2(t_0, x_0)
\leq 2e^{\int_{t_0}^{\infty}8a(t)dt}\bar{\gamma}^2(|x_0|)\triangleq H_0.
\end{align}
Let $T\rightarrow \infty$, then by Fatou's lemma, we have
$$E\Big[\sup_{t\in[t_0,\infty)}
\Big(e^{-\int_{t_0}^{t}\mu(s)ds}V(t, x(t))\Big)^2\Big]
\leq H_0<\infty,$$
which means that
\begin{align}\label{theorem41eq05}
E\Big[\sup_{t\in[t_0,\infty)}
\Big(e^{-\int_{t_0}^{t}\mu(s)ds}V(t, x(t))\Big)\Big]<\infty.
\end{align}

For any given $x_0\in \mathcal{R}^r\backslash\{0\}$,
let $\sigma_n=\inf\{t\geq t_0:|x(t)|>n\}$. It follows from
(\ref{theorem41eq02}) that
\begin{align}\label{theorem41eq06}
E\Big\{\Big[e^{-\int_{t_0}^{t}\mu(s)ds}
V(t, x(t))\Big]\Big|_{t=\sigma_n \wedge \sigma_b}\Big\}
= V(t_0,x(t_0)),
\end{align}
where $\sigma_b$ represents any bounded stopping time. Moreover,
\begin{align}\label{theorem41eq07}
0 \leq e^{-\int_{t_0}^{\sigma_n \wedge \sigma_b}\mu(s)ds}
V(\sigma_n \wedge \sigma_b, x(\sigma_n \wedge \sigma_b))
\leq \sup_{t\in[t_0,\infty)}\Big(e^{-\int_{t_0}^{t}\mu(s)ds}V(t,x(t))\Big).
\end{align}
From (\ref{theorem41eq05}) and (\ref{theorem41eq07}),
apply Lebesgue's dominated convergence theorem for (\ref{theorem41eq06}),
then we  have
\begin{align}\label{theorem41eq08}
E\Big[e^{-\int_{t_0}^{\sigma_b}\mu(s)ds}
V(\sigma_b, x(\sigma_b))\Big]=V(t_0,x(t_0)),
\end{align}
where $\sigma_n\rightarrow\infty$ as $n\rightarrow\infty$ is used.
This signifies that $e^{-\int_{t_0}^{t}\mu(s)ds}V(t, x(t))$ is a uniformly integrable
martingale. Hence, by  martingale convergence theorem
(Theorem 7.11 in [\ref{Klebaner2005}]), we have
\begin{align}\label{theorem41eq09}
0 \leq \lim_{t\rightarrow\infty} e^{-\int_{t_0}^{t}\mu(s)ds}
V(t, x(t))=\vartheta(\omega)<\infty, ~~~a.s.,
\end{align}
where $\vartheta(\omega)$ is a random variable.

Secondly, the conditions (\ref{theorem41eq1}) and (\ref{theorem41eq2}) ensure that
there is a stable solution to system (\ref{eq21}) in probability.
In fact, for every given $x_0\in \mathcal{R}^r\backslash\{0\}$,
let $T^*=\max\{T\geq t_0: \int_{t_0}^{T}\mu(t)dt= 0\}$.
If $t\in[t_0, T^*]$,  from (\ref{theorem41eq1}), (\ref{theorem41eq02})
and Remark \ref{rem21}, we obtain that
\begin{align}\label{theorem41eq10}
E[\underline{\gamma}(|x(t)|)]\leq EV(t, x(t))\leq \bar{\gamma}(|x_0|)e^{d_\mu}.
\end{align}
If $t\in(T^*, \infty)$, by (\ref{theorem41eq02}), we have
\begin{align}\label{theorem41eq101}
E[\underline{\gamma}(|x(t)|)]\leq &EV(t, x(t))
=V(t_0, x_0)e^{\int_{t_0}^{t}\mu(s)ds}
\leq \bar{\gamma}(|x_0|),
\end{align}
where $\int_{t_0}^{t}\mu(s)ds=\int_{t_0}^{T^*}\mu(s)ds+\int_{T^*}^{t}\mu(s)ds
=\int_{T^*}^{t}\mu(s)ds<0$ is used.

Hence, for any $t\geq t_0$, we have
\begin{align*}
E[\underline{\gamma}(|x(t)|)]\leq \bar{\gamma}(|x_0|)e^{d_\mu}.
\end{align*}
From Chebyshev's inequality,
for any $\varepsilon\in (0,1)$ and $R>0$, we have
\begin{align}\label{theorem41eq11}
P\{|x(t)|\geq R, t\geq t_0\}
= P\{\underline{\gamma}(|x(t)|)
\geq \underline{\gamma}(R), t\geq t_0\}
\leq \underline{\gamma}^{-1}(R)E[\underline{\gamma}(|x(t)|)]
\leq \underline{\gamma}^{-1}(R)\bar{\gamma}(|x_0|)e^{d_\mu}
\leq \varepsilon
\end{align}
as $|x_0|<\tilde{\delta}(\varepsilon, R)
=\bar{\gamma}^{-1}(\varepsilon\underline{\gamma}(R)e^{-d_\mu})$.
This implies that $P\{|x(t)|< R, t\geq t_0\}\geq 1-\varepsilon$ as
$|x_0|<\tilde{\delta}(\varepsilon, R)$.

Finally, consider (\ref{theorem41eq09}),
we can prove that  it is not finite-time attractive in probability
for the  solution of system (\ref{eq21})
by the same method as Theorem 4.1 in [\ref{Yin2011}].
\end{proof}

\begin{remark}\label{rem41}
In Theorem 4.1 of [\ref{Yin2011}], $\mathcal {L}V(t,x)= -c_3V(t, x)$
with $c_3>0$ is used. It is clear that $\mu(t)=-c_3(c_3>0)$ is a
UASF. In Theorem \ref{theorem41}, the constraint condition of
$\mathcal {L}V(t,x)= -c_3V(t, x)$ is replaced by $\mathcal
{L}V(t,x)=\mu(t) V(t, x)$. Hence, Theorem \ref{theorem41}
 improves  Theorem 4.1 of [\ref{Yin2011}].
\end{remark}

\section{A simulation example}\label{S5}

\textbf{Example 3.} Consider the  stochastic nonlinear system
with stochastic inverse dynamics $(\Sigma)$
\begin{equation}\label{eq51}
\begin{cases}
d\chi(t)=\varphi(t)\chi^{\beta_1}(t)dt+\cos (x_1(t))\chi^{\beta_2}(t)dB_0(t),\\
dx_1(t)=x_2(t)dt\\
dx_2(t)=u(t)dt+x_2^{\beta_3}(t)\sin(\chi(t))dB(t),
\end{cases}
\end{equation}
where $\beta_1=(2l-1)/(2l+1)\in(0.5, 1)$,
$\beta_2=2l/(2l+1)\in(0.5, 1)$ and
$\beta_3=(2l-2)/(2l-1)\in(0.5, 1)$ with
$l\in\mathcal {N}$ and $l\geq 2$.
$B_0(t)\in \mathcal{R}$ and $B(t)\in \mathcal{R}$
are mutually independent standard Brownian motions(Wiener processes).
$\varphi(t)=0.5({t\cos t}/{(1+t)}-1.5)$.

For $\chi$-subsystem, we select $V_0=\chi^2$, then
$\mathcal{L}V_0=2\varphi(t)\chi^{\beta_1+1}+\chi^{2\beta_2}\cos^2 (x_1)
\leq (2\varphi(t)+1)\chi^{2\beta_2}
=\mu_0(t)V_0^{\beta_2},$
where $\beta_1+1=2\beta_2$ and $\mu_0(t)=2\varphi(t)+1$ is a
UASF since $\int_{t_0}^t\mu_0(s)ds\leq -0.5(t-t_0)+5$.

For system $\Sigma$, we introduce $V_1=V_0+W_0$ with $W_0=0.5z_1^2$ and $z_1=x_1$,
then
\begin{align}\label{eq52}
\mathcal {L}V_1=&\mathcal {L}V_0+\mathcal {L}W_0
=\mathcal {L}V_0+z_1x_2
\leq \mu_0(t)V_0^{\beta_2}+z_1(x_2-\alpha)+z_1\alpha.
\end{align}
Let the stabilizing function $\alpha=-c_1z_1^{\lambda}$ with $\lambda=\beta_1$ and
$c_1>0$ being a design constant, then (\ref{eq52}) can be changed
into
\begin{align}\label{eq53}
\mathcal
{L}V_1\leq\mu_0(t)V_0^{\beta_2}+z_1(x_2-\alpha)-c_1z_1^{1+\lambda}.
\end{align}
Further, we introduce
$V_2=V_1+W_1, W_1=\int_{\alpha}^{x_2}(v^{\frac{1}{\lambda}}
-\alpha^{\frac{1}{\lambda}})^{2-\lambda}dv,$
and $z_2=x_2^{\frac{1}{\lambda}}-\alpha^{\frac{1}{\lambda}}$,
then it follows from Proposition B.1-B.2 in [\ref{Qian2001b}] that
$V_2$ is a positive definite function and
\begin{align}\label{eq533}
W_0+W_1\leq 2(z_{1}^2+z_{2}^2).
\end{align}
Meanwhile, we can deduce that
\begin{align}\label{eq54}
\mathcal {L}V_2
=\mathcal {L}V_1+\mathcal {L}W_1
=\mathcal {L}V_1+\frac{\partial W_1}{\partial x_1}x_2
+\frac{\partial W_1}{\partial x_2}u
+\frac{1}{2}\frac{\partial^2 W_1}{\partial x_2^2}x_2^{2\beta_3}\sin^2(\chi).
\end{align}
Note that
\begin{align}\label{eq55}
\frac{\partial W_1}{\partial x_1}x_2
=&-(2-\lambda)\frac{\partial \alpha^{\frac{1}{\lambda}}}{\partial x_1}x_2
\int_{\alpha}^{x_2}(v^{\frac{1}{\lambda}}-\alpha^{\frac{1}{\lambda}})^{1-\lambda}dv\cr
=&c_1^{\frac{1}{\lambda}}(2-\lambda)x_2
\int_{\alpha}^{x_2}(v^{\frac{1}{\lambda}}-\alpha^{\frac{1}{\lambda}})^{1-\lambda}dv\cr
\leq &c_1^{\frac{1}{\lambda}}(2-\lambda)|z_2|^{1-\lambda}|x_2-\alpha||x_2|\cr
\leq &c_1^{\frac{1}{\lambda}}(2-\lambda)2^{1-\lambda}|z_2|^{1-\lambda}|z_2|^\lambda|x_2|\cr
\leq &c_1^{\frac{1}{\lambda}}(2-\lambda)2^{1-\lambda}|z_2|(|x_2-\alpha|+|\alpha|)\cr
\leq &c_1^{\frac{1}{\lambda}}(2-\lambda)2^{1-\lambda}(2^{1-\lambda}z_2^{1+\lambda}
+c_1|z_2||z_1|^\lambda)\cr
\leq &c_1^{\frac{1}{\lambda}}(2-\lambda)2^{1-\lambda}\Big(2^{1-\lambda}z_2^{1+\lambda}
+\frac{c_1h_1^{-\lambda}}{1+\lambda}z_2^{1+\lambda}
+\frac{c_1h_1\lambda}{1+\lambda}z_1^{1+\lambda}\Big)\cr
=&c_1^{\frac{1}{\lambda}}(2-\lambda)[2^{2(1-\lambda)}z_2^{1+\lambda}
+\frac{2^{1-\lambda}c_1
h_1\lambda}{1+\lambda}
z_1^{1+\lambda}]
+\frac{2^{1-\lambda}c_1^{1+\frac{1}{\lambda}}h_1^{-\lambda}}{1+\lambda}(2-\lambda)z_2^{1+\lambda},
\end{align}
\begin{align}\label{eq57}
\frac{1}{2}\frac{\partial^2 W_1}{\partial x_2^2}x_2^{2\beta_3}\sin^2(\chi)
=&\frac{2-\lambda}{2}z_2^{1-\lambda}\frac{1}{\lambda}x_2^{\frac{1}{\lambda}-1}x_2^{2\beta_3}\sin^2(\chi)\cr
\leq &\frac{2-\lambda}{2\lambda}z_2^{1-\lambda}x_2^{2}\cr
=&\frac{2-\lambda}{2\lambda}z_2^{1-\lambda}(x_2^{\frac{1}{\lambda}})^{2\lambda}\cr
\leq &\frac{2-\lambda}{2\lambda}z_2^{1-\lambda}2^{2\lambda-1}
(z_2^{2\lambda}+\alpha^{2})\cr
\leq &\frac{2-\lambda}{\lambda}z_2^{1-\lambda}
(z_2^{2\lambda}+\alpha^{2})\cr
=&\frac{2-\lambda}{\lambda}z_2^{1+\lambda}
+\frac{2-\lambda}{\lambda}c_1^2z_2^{1-\lambda}z_1^{2\lambda}\cr
\leq &\frac{2-\lambda}{\lambda}z_2^{1+\lambda}
+\frac{2-\lambda}{\lambda}c_1^2
\Big(\frac{2\lambda h_2}{1+\lambda}z_1^{1+\lambda}
+\frac{1-\lambda}{1+\lambda}h_2^{-\frac{2\lambda}{1-\lambda}}z_2^{1+\lambda}\Big)\cr
=& \frac{2-\lambda}{\lambda}z_2^{1+\lambda}
+\frac{2-\lambda}{\lambda}\frac{1-\lambda}{1+\lambda}c_1^2h_2^{-\frac{2\lambda}{1-\lambda}}
z_2^{1+\lambda}
+\frac{2(2-\lambda)}{1+\lambda}c_1^2h_2z_1^{1+\lambda},
\end{align}
\begin{align}\label{eq56}
\frac{\partial W_1}{\partial x_2}u=z_2^{2-\lambda}u,
\end{align}
where $1-\lambda=2/(2l+1)$ and $1+\lambda=4l/(2l+1)$,
$h_1>0$ and $h_2>0$ are to  be  designed constants and Lemma 4 in
[\ref{Huang2016}] is used.

Substituting (\ref{eq53}), (\ref{eq55})-(\ref{eq56}) into (\ref{eq54}), we can obtain that
\begin{align}\label{eq58}
\mathcal {L}V_2
\leq \mu_0(t)V_0^{\beta_2}+z_1(x_2-\alpha)-c_1z_1^{1+\lambda}
+\tilde{d}_2z_2^{1+\lambda}+\tilde{d}_1z_1^{1+\lambda}
+z_2^{2-\lambda}u,
\end{align}
where
$\tilde{d}_2=
(2-\lambda){2^{1-\lambda}c_1^{1+\frac{1}{\lambda}}h_1^{-\lambda}}/{(1+\lambda)}
+(2-\lambda)c_1^{\frac{1}{\lambda}}2^{2(1-\lambda)}
+{(1-\lambda)(2-\lambda)c_1^2h_2^{-\frac{2\lambda}{1-\lambda}}}/{\lambda(1+\lambda)}
+({2-\lambda})/{\lambda}$,
$\tilde{d}_1={2^{1-\lambda}(2-\lambda)c_1^{1+\frac{1}{\lambda}}h_1\lambda}/{(1+\lambda)}
+{2(2-\lambda)c_1^2h_2}/{(1+\lambda)}$.
In addition,
\begin{align}\label{eq59}
z_1(x_2-\alpha)
\leq |z_1||(x_2^{\frac{1}{\lambda}})^{\lambda}-(\alpha^{\frac{1}{\lambda}})^{\lambda}|
\leq 2^{1-\lambda}|z_1||z_2|^{\lambda}
\leq \frac{2^{1-\lambda}}{1+\lambda}[h_3z_1^{1+\lambda}
+\lambda h_3^{-\frac{1}{\lambda}}z_2^{1+\lambda}],
\end{align}
where $h_3>0$ is a  constant that will be designed,  and Lemma 2.3
in [\ref{Huang2005}] is applied. Hence, (\ref{eq58}) and
(\ref{eq59}) mean  that
\begin{align}\label{eq510}
\mathcal {L}V_2\leq
\mu_0(t)V_0^{\beta_2}-c_1z_1^{1+\lambda}
+{d}_2z_2^{1+\lambda}
+{d}_1z_1^{1+\lambda}
+z_2^{2-\lambda}u,
\end{align}
where $d_1=\tilde{d}_1+{2^{1-\lambda}h_3}/{(1+\lambda)}$ and
$d_2=\tilde{d}_2+{2^{1-\lambda}\lambda h_3^{-\frac{1}{\lambda}}}/{(1+\lambda)}$.

Let $h_1=(1+\lambda)2^{\lambda-1}c_1^{-\frac{1}{\lambda}}/6\lambda(2-\lambda)$,
$h_2=(1+\lambda)c_1^{-1}/12(2-\lambda)$
and $h_3=2^{\lambda}c_1(1+\lambda)/12$, then $d_1=c_1/2$ and (\ref{eq510}) can be rewritten as
\begin{align}\label{eq511}
\mathcal {L}V_2\leq
\mu_0(t)V_0^{\beta_2}-0.5{c_1}z_1^{1+\lambda}
+{d}_2z_2^{1+\lambda}+z_2^{2-\lambda}u.
\end{align}
Further, we choose the controller
\begin{align}\label{eq512}
u=-{d}_2z_2^{2\lambda-1}-0.5{c_2}z_2^{2\lambda-1},
\end{align}
where $c_2>0$ is a design constant.
Then, together with $1+\lambda=1+\beta_1=2\beta_2$,
(\ref{eq511}) and  (\ref{eq512}) lead to
\begin{align}\label{eq513}
\mathcal {L}V_2
\leq &\mu_0(t)V_0^{\beta_2}-0.5{c_1}(z_1^2)^{\beta_2}-0.5{c_2}(z_2^2)^{\beta_2}\cr
\leq &\mu_0(t)V_0^{\beta_2}-0.5{c_0}((z_1^2)^{\beta_2}+(z_2^2)^{\beta_2})\cr
\leq &\mu_0(t)V_0^{\beta_2}-0.5{c_0}(z_1^2+z_2^2)^{\beta_2}\cr
\leq &\mu_0(t)V_0^{\beta_2}-0.25{c_0}(W_0+W_1)^{\beta_2}\cr
\leq &\tilde{\mu}(t)(V_0^{\beta_2}+(W_0+W_1)^{\beta_2})\cr
\leq &{\mu}(t)V_2^{\beta_2},
\end{align}
where
\begin{equation*}
\mu(t)=
\begin{cases}
2^{1-\beta_2}\tilde{\mu}(t)~~&\textmd{if}~~ \tilde{\mu}(t)\geq 0\\
\tilde{\mu}(t)~~&\textmd{if}~~ \tilde{\mu}(t)< 0\\
\end{cases},~~~~
\tilde{\mu}(t)=
\begin{cases}
\mu_0(t)~~&\textmd{if}~~ \mu_0(t)\geq -0.25c_0\\
-0.25c_0~~&\textmd{if}~~ \mu_0(t)<-0.25c_0\\
\end{cases},
\end{equation*}
$c_0=\min\{c_1, c_2\}$.
Moreover, Lemma 4 in [\ref{Huang2016}], (\ref{eq533})
and $0.5^{\beta_2}>0.5$  are used in (\ref{eq513}).
Note that $\mu_0(t)$ is a UASF, so $\tilde{\mu}(t)$ and $\mu(t)$
are UASFs by Remark \ref{rem21}.

Therefore,  let $\xi=(\chi, z_1, z_2)^T$ and Lyapunov function
$V=V_0+W_0+W_1$ for system $\Sigma$, then the positive function
$V=V_0+W_0+W_1\leq \chi^2+2(z_1^2+z_2^2)\leq 2|\xi|^2$ and $\mathcal
{L}V\leq {\mu}(t)V^{\beta_2}$, where  (\ref{eq533}) is considered.
By  Theorem \ref{theorem31}, stochastic nonlinear system
(\ref{eq51})  is finite-time stabilizable in probability with the
controller (\ref{eq512}).

\begin{figure}[htb]
\centering
\includegraphics[height=10cm, width=15cm]{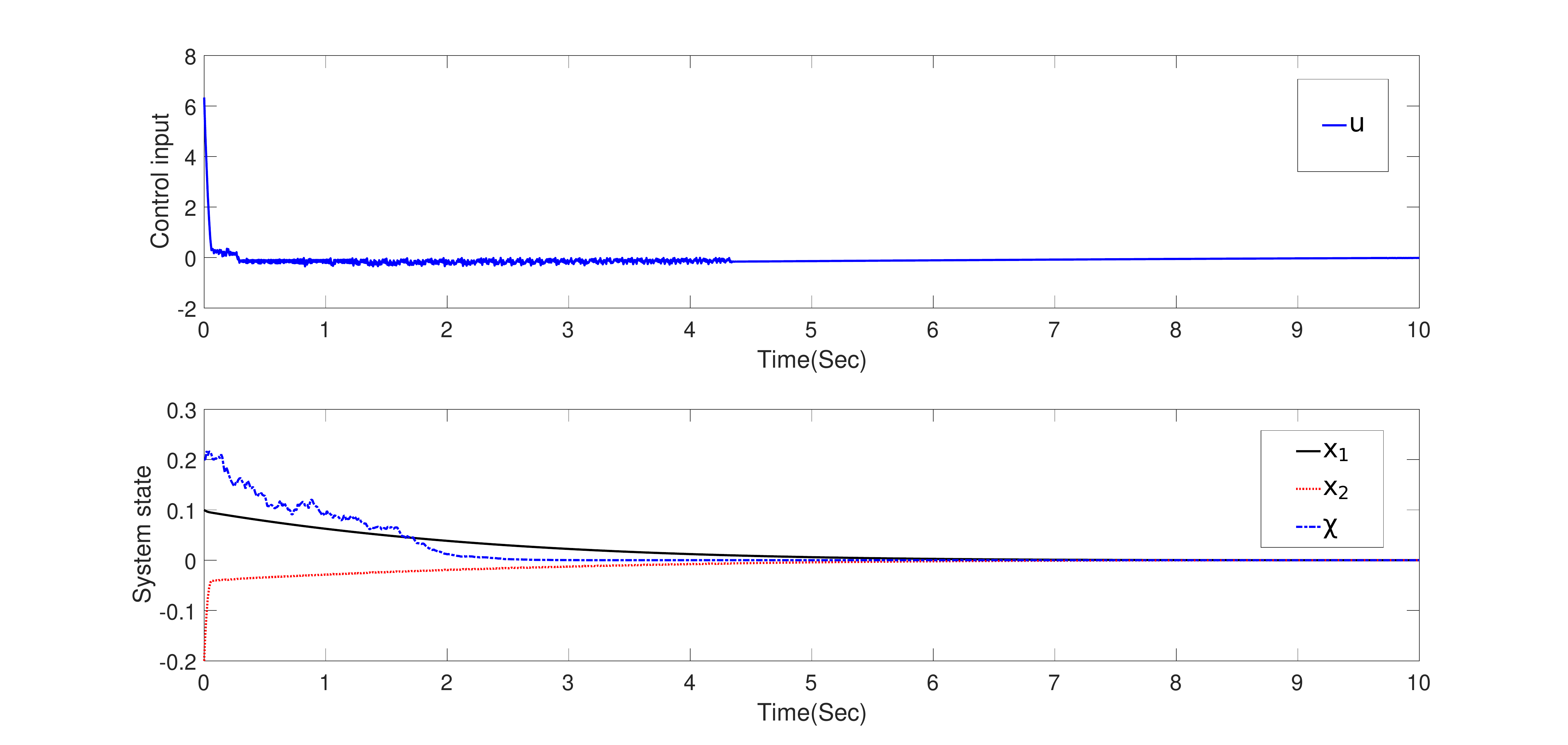}
\center{ {\small Figure 3. The state curves of  system (\ref{eq51}) with (\ref{eq512}).}}
\end{figure}
In simulation, we choose design parameter  $l=4$, $c_1=0.3$ and $c_2=0.3$, system
initial value $\chi(0)=0.2$, $x_1(0)=0.1$ and $x_2(0)=-0.2$,
then system control input trajectory and system state curves are given in Figure 3.

\section{Conclusions}\label{S7}
This paper has further studied  the finite-time stability and
instability in probability for stochastic nonlinear systems. A
weaker sufficient condition, which ensures that the considered
system has a global solution,  has been  presented. Some improved
finite-time stability and instability theorems have been  given by
the UASF. The obtained stability and instability results relax the
strict constraint conditions  on  $\mathcal {L}V$ existing in
previous references. Some examples are given to illustrate that the
obtained  stability theorems can be used to analysis  and synthesis
of  stochastic nonlinear systems  including autonomous systems and
forced systems.

\section*{ References}
{\small\setlength{\baselineskip}{13pt}
\newcounter{a}
\noindent
\begin{list}{\parbox[t]{0.6cm}{\arabic{a}\hfill ${}$}}{\usecounter{a}}
\setlength{\itemsep}{0cm}

\item\label{Khalil1992}
Khalil, H. K. (2002). {\em Nonlinear Systems} (3rd. ed.). New
Jersey: Prentice Hall.

\item\label{Khasminskii1980}
Khasminski, R.  (2012). {\em Stochastic Stability of Differential
Equations}(2nd. ed.).  New York: Springer.

\item\label{Mao2007}
Mao, X. R. (2007).
{\em Stochastic Differential Equations and
Applications}(2nd ed.). Chichester: Horwood Publishing.

\item\label{yaoliqiang} Yao, L., Zhang,  W., \& Xie, X. J. (2020).
Stability analysis of random nonlinear systems with time-varying
delay and its application. {\em  Automatica}, 117, Article 108994.

\item\label{xuelingrong}   Xue, L.,  Zhang, T.,  Zhang, W.,  \& Xie, X. J. (2018). Global
adaptive stabilization and tracking control for high-order
stochastic nonlinear systems with time-varying delay. {\em IEEE
Transactions on  Automatic Control}, 63(9), 2928-2943.

\item\label{Haimo1986}
Haimo, V. T. (1986). Finite time controllers. {\em SIAM Journal on
Control and Optimization}. 24(4), 760-770.

\item\label{Bhat2000}
Bhat, S. P., \&  Bernstein, D. S. (2000). Finite-time stability of
continuous autonomous systems. {\em SIAM Journal on Control and
Optimization}. 38(3), 751-766,

\item\label{Moulay2008}
Moulay, E., \& Perruquetti, W. (2008). Finite time stability
conditions for non-autonomous continuous systems. {\em International
Journal of Control}. 81(5), 797-803.

\item\label{Nersesov2008}
Nersesov, S. G., Haddad, W. M., \& Hui, Q. (2008). Finite time
stabilization of nonlinear dynamical systems via control vector
Lyapunov functions. {\em Journal of The Franklin Institute}, 345(7),
819-837.

\item\label{Yu2005}
Yu, S. H.,  Yu, X. H., Shirinzadeh, B., \& Man, Z. H. (2005).
Continuous finite-time control for robotic manipulators
with terminal sliding mode.
{\em Automatica}, 41(11), 1957-1964.

\item\label{Banavar2006}
Banavar, R. N., \& Sankaranarayanan, V. (2006).
{\em Switched Finite Time Control of a Class of  Underactuated Systems}.
Berlin Heidelberg: Springer-Verlag.

\item\label{Wang2013}
Wang, Y. Z., \& Feng, G. (2013). On finite-time stability and
stabilization of nonlinear port-controlled Hamiltonian systems. {\em
Science China Information Sciences}, 56(10), 251-264.

\item\label{Wu2017}
Wu, J.,  Li, J., Zong, G., \&  Chen, W. (2017).
Global finite-time adaptive stabilization of nonlinearly
parameterized systems with multiple unknown control directions.
{\em IEEE Transactions on Systems, Man, and Cybernetics: Systems}, 47(7), 298-307.

\item\label{Zhai2018}
Zhai, J. Y., Song, Z. B., Fei, S., \& Zhu, Z. (2018).
Global finite-time output feedback stabilisation for
a class of switched high-order nonlinear systems.
{\em International Journal of Control}, 91(1), 170-180.

\item\label{Du2018}
Du, H.,  Zhai, J. Y., Chen, M. Z. Q., \&  Zhu, W. (2019).
Robustness analysis of a continuous higher-order
finite-time control system under sampled-data control.
{\em IEEE Transactions on Automatic Control}, 64(6), 2488-2494.

\item\label{Ming2021}
Li, Y. M., Hu, J.,  Yang T., \& Fan, Y. (2021).
Global finite-time stabilization of switched high-order
rational power nonlinear systems.
{\em Nonlinear Analysis: Hybrid Systems}, 40, Article 101007.

\item\label{Yin2011}
Yin, J. L., Khoo, S., Man, Z. H., \& Yu, X. H. (2011).
Finite-time stability and instability of stochastic nonlinear systems.
{\em Automatica}, 47(12), 2671-2677.

\item\label{Yu2019}
Yu, X.,  Yin, J. L.,  \& Khoo, S. (2019).
Generalized Lyapunov criteria on
finite-time stability of stochastic nonlinear systems.
{\em Automatica}, 107, 183-189.

\item\label{Yu2021}
Yu, X., Yin,  J. L., \& Khoo, S. (2021).
New Lyapunov conditions of stochastic finite-time stability
and instability of nonlinear time-varying SDEs.
{\em International Journal of Control}, 94(6), 1674-1681.

\item\label{Wang2015}
Wang, H., \& Zhu, Q. X. (2015).
Finite-time stabilization of high-order
stochastic nonlinear systems in strict-feedback form.
{\em Automatica}, 54, 284-291.

\item\label{xiexuejun1} Xie, X. J.,  $\&$  Li, G. J. (2019).
Finite-time output-feedback stabilization of high-order
nonholonomic systems. {\em International Journal of Robust and
Nonlinear Control},  29(9),  2695-2711.

\item\label{Min2017}
Min, H. F., Xu, S. Y., Ma, Q., Qi, Z., \& Zhang, Z. Q. (2017).
Finite-time stabilization of stochastic nonlinear systems with SiISS inverse dynamics.
{\em International Journal of Robust Nonlinear Control}, 27(18), 4648-4663.

\item\label{Zhao2018}
Zhao, G. H., Li, J. C., \& Liu, S. J. (2018).
Finite-time stabilization of weak solutions for a class
of non-local Lipschitzian stochastic nonlinear systems
with inverse dynamics.
{\em Automatica}, 98, 285-295.

\item\label{Huang2016}
Huang, S. P., \& Xiang, Z. R. (2016).
Finite-time stabilization of switched stochastic nonlinear
systems with mixed odd and even powers.
{\em Automatica}, 73, 130-137.

\item\label{Song2019}
Song, Z. B., \& Zhai, J. Y. (2019).
Global finite-time stabilization for switched stochastic
nonlinear systems via output feedback.
{\em Journal of The Franklin Institute}, 365(3), 1379-1395.

\item\label{Zhou2018}
Zhou, B.,  \&  Luo, W. W.  (2018).
Improved Razumikhin and Krasovskii stability
criteria for time-varying stochastic time-delay systems.
{\em Automatica}, 89, 382-391.

\item\label{Zhou2016}
Zhou, B.,  \& Egorov,  A. V. (2016a).
Razumikhin and Krasovskii stability
theorems for time-varying time-delay systems.
{\em Automatica}, 71, 281-291.

\item\label{Situ2005}
Situ, R. (2005).
{\em Theory of Stochastic Differential Equations with Jumps
and Applications: Mathematical and Analytical Techniques with
Applications to Engineering}.  New York: Springer.

\item\label{Skorokhod1965}
Skorokhod, A. V. (1965).
{\em Studies in the theory of random processes}.
Massachusetts: Addison-wesley.

\item\label{Zhao2015}
Zhao, P.,  Feng, W.,  Zhao, Y., \& Kang, Y.  (2015).
Finite-time stochastic input-to-state stability of
switched stochastic nonlinear systems.
{\em Applied Mathematics and Computation}, 268, 1038-1054.

\item\label{Zhou2016b}
Zhou, B. (2016b).
On asymptotic stability of linear time-varying systems.
{\em Automatica}, 68, 266-276.

\item\label{Klebaner2005}
Klebaner, F. C. (2005).
{\em Introduction to Stochastic Calculus with
Applications}(2nd ed.). London: Imperial College Press.

\item\label{Qian2001b}
 Qian C. J., \& Lin, W. (2001).
A continuous feedback approach to global strong stabilization of nonlinear systems.
{\em IEEE Transactions on Automatic Control},  46(7), 1061-1079.

\item\label{Huang2005}
Huang, X. Q.,  Lin, W.,  \& Yang, B. (2005).
Global finite-time stabilization of a class of uncertain nonlinear systems.
{\em Automatica}, 42(5),  881-888.

\end{list}}

\end{document}